\documentclass[11pt]{article}
\usepackage{graphicx}
\usepackage{epstopdf}
\usepackage{amsmath, amssymb}
\usepackage{latexsym, color}
\usepackage{mathtools}
\usepackage{xcolor}
\def\la{\big\langle}
\def\ra{\big\rangle}

\def\ds{\displaystyle}
\def\forall{\hbox{for all}~}
\def\L{{\bf L}}

\def\bfn{{\bf n}}
\def\bfe{{\bf e}}

\def\ve{\varepsilon}

\def\E{{\cal E}}
\def\A{{\cal A}}

\def\H{{\cal H}}\def\caL{{\cal L}} 
\def\dint{\int\!\!\int}

\def\R{{\mathbb R}}

\def\TV{\hbox{Tot.Var.}}

\def\implies{\Longrightarrow}
\def\vp{\varphi}

\def\F{{\cal F}}

\def\vs{\vskip 2em}

\def\v{\vskip 1em}
\def\O{{\cal O}}

\def\S{{\cal S}}
\def\C{{\cal C}}

\def\H{{\cal H}}

\def\bega{\begin{array}}
\def\enda{\end{array}}
\def\begi{\begin{itemize}}
\def\endi{\end{itemize}}
\def\ov{\overline}

\def\Tilde{\widetilde}
\def\Hat{\widehat}

\def\meas{\hbox{meas}}
\def\bel{\begin{equation}\label}
\def\eeq{\end{equation}}
\def\sqr#1#2{\vbox{\hrule height .#2pt
\hbox{\vrule width .#2pt height #1pt \kern #1pt
\vrule width .#2pt}\hrule height .#2pt }}
\def\square{\sqr74}
\def\endproof{\hphantom{MM}\hfill\llap{$\square$}\goodbreak}
\definecolor{cadmiumgreen}{rgb}{0.0, 0.42, 0.24}

\newtheorem{theorem}{Theorem}[section]
\newtheorem{corollary}{Corollary}[section]
\newtheorem{lemma}{Lemma}[section]

\newtheorem{remark}{Remark}[section]
\newtheorem{definition}{Definition}[section]

\begin{document}
\title{\bf Optimal Slicing Problems}
\vs
\author{Alberto Bressan$^{(1)}$, Maria Teresa Chiri$^{(2)}$ and Elsa M. Marchini$^{(3)}$
\\
\, \\
{\small $^{(1)}$Department of Mathematics, Penn State University,} {\small University Park, PA~16802, USA.}\\
 {\small $^{(2)}$~Department of Mathematics and  Statistics, Queen's University,
Kingston, ON K7L3N6,
Canada.}\\
{\small $^{(3)}$~Dipartimento di Matematica, Politecnico di Milano,} {\small Piazza L.\,da Vinci 32, Milano 20133, Italy.}\\
\, \\
{\small E-mails: axb62@psu.edu,~maria.chiri@queensu.ca,~elsa.marchini@polimi.it}
}
\maketitle

\begin{abstract}  Given a bounded open set $V\subset \R^2$, we consider the problem of slicing this set by a family of curves, whose average length is as small as possible.   The first sections of the paper prove the existence of an optimal slicing and establish a connection with the problem of eradicating an invasive biological species from an island,
considered in  \cite{BMS}. Namely, an optimal slicing can be obtained as a limit of minimum time eradication strategies.  The second part of the paper derives various necessary conditions for optimality,
describing the behavior of an optimal slicing strategy in the interior of $V$ and near the boundary $\partial V$.
\end{abstract}

\v
\section{Introduction}
\label{sec:1}
\setcounter{equation}{0}
Let $V\subset\R^2$ be a bounded open set, with 2-dimensional Lebesgue measure $\caL^2(V)=T$.
 By a {\bf slicing} of $V$ we mean a set-valued map $t\mapsto \Omega(t)\subseteq V$, 
 $t\in [0,T]$, such that 
 \bel{1} 0\leq t_1<t_2\leq T\qquad\implies\qquad \Omega(t_1)\subset\Omega(t_2),\eeq
 \bel{2} \caL^2\bigl(\Omega(t)\bigr)~=~t\qquad\qquad\forall~t\in [0,T].\eeq
We say that $ \Omega(\cdot)$ is an {\bf optimal slicing}  of the set $V$ if it minimizes the average length
of the relative boundaries. More precisely,
denoting by $\H^1$ the 1-dimensional Hausdorff measure~\cite{AFP, EG, M}, 
we consider
\begi\item[{\bf (OSP)}] {\bf Optimal Slicing Problem.} {\it Among all slicings $t\mapsto\Omega(t)$ of
the set $V$, find one that minimizes the functional
\bel{J}J(\Omega)~\doteq~\int_0^T \H^1\bigl(\partial \Omega(t)\cap V\bigr)\, dt.\eeq
}
\endi

An obvious way to establish the optimality of a slicing strategy is provided by
\begin{theorem}\label{t:11}
{\bf (a sufficient condition for optimality).} Let $V$ be bounded open set with measure $T=\caL^2(V)$,
and let
 $t\mapsto \Omega(t)$, $t\in [0,T]$ be a slicing of $V$.
   If
\bel{Dido}
\H^1\bigl(\partial\Omega(t)\cap V\bigr)\,=\,\min \Big\{ \H^1\bigl(\partial A\cap V\bigr)\,;~~A\subseteq V,~~
\caL^2(A)=t\Big\}\qquad\forall t\in [0,T],\eeq
then $\Omega(\cdot)$ is optimal.
\end{theorem}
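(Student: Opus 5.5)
The argument is a pointwise comparison of integrands. Fix an arbitrary competing slicing $t\mapsto\Omega'(t)$ of $V$, and introduce the \emph{isoperimetric profile} of $V$:
$$m(t)\,\doteq\,\inf\Big\{ \H^1\bigl(\partial A\cap V\bigr)\,;~~A\subseteq V,~\caL^2(A)=t\Big\}.$$
By the definition (\ref{2}) of a slicing, $\Omega'(t)\subseteq V$ and $\caL^2(\Omega'(t))=t$ for every $t\in[0,T]$. Hence $\Omega'(t)$ is an admissible set $A$ in this infimum, and we get the pointwise bound
$$\H^1\bigl(\partial\Omega'(t)\cap V\bigr)\,\geq\, m(t)\qquad\forall\, t\in[0,T].$$
The hypothesis (\ref{Dido}) states precisely that $\H^1\bigl(\partial\Omega(t)\cap V\bigr)=m(t)$ for every $t$, with the minimum attained. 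Therefore
$$\H^1\bigl(\partial\Omega(t)\cap V\bigr)\,\le\,\H^1\bigl(\partial\Omega'(t)\cap V\bigr)\qquad\forall\, t\in[0,T].$$

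Integrating this inequality over $[0,T]$ gives $J(\Omega)\le J(\Omega')$ in (\ref{J}). Since $\Omega'$ was arbitrary, $\Omega(\cdot)$ is optimal.

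The only step needing care is that integration is legitimate. We must check that $t\mapsto \H^1(\partial\Omega(t)\cap V)$ and $t\mapsto \H^1(\partial\Omega'(t)\cap V)$ are measurable, or else read $J$ as a lower (or upper) Lebesgue integral of nonnegative functions with values in $[0,+\infty]$. For $\Omega$ this is automatic: by (\ref{Dido}) its integrand equals $m(t)$, and $m$ is a fixed function depending only on $V$. For the competitor, if $J(\Omega')=+\infty$ there is nothing to prove. Otherwise, monotonicity of the integral for nonnegative functions, with $J$ interpreted consistently (for instance as an upper integral), gives $\int_0^T m\,dt\le J(\Omega')$, because the pointwise inequality holds for every $t$. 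I expect no real obstacle beyond this bookkeeping of how $J$ is defined. The inclusion property (\ref{1}) is never used: it is needed for $\Omega$ to be a slicing at all, but it plays no role in the comparison.
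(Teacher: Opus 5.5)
Your proof is correct and is the argument the paper intends: the paper gives no separate proof of Theorem~\ref{t:11}, presenting it as an ``obvious'' consequence of the fact that (\ref{Dido}) makes the integrand of $J$ pointwise minimal among all slicings, so integrating over $[0,T]$ gives $J(\Omega)\le J(\Omega')$. Your measurability bookkeeping goes beyond the paper, which tacitly treats $J$ as well defined; the remark that $m$ depends only on $V$ does not by itself give measurability (although $m$ is in fact upper semicontinuous on $]0,T[$, by adding or removing small discs), but reading $J$ as an upper or lower integral for both slicings makes this moot.
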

In other words, if each set $\Omega(t)$ provides a solution to Dido's problem~\cite{Dido},  
minimizing the 
length of its relative boundary among all subsets with the same area, then $\Omega(\cdot)$ is optimal.  
This condition allows us to explicitly compute the optimal slicing of a disc, or an ellipse.
However, due to the additional monotonicity requirement (\ref{1}),
 a general convex set $V\subset \R^2$ may not admit any slicing which satisfies (\ref{Dido}).
In particular, Theorem~\ref{t:11} does not apply to any polygon.

The minimization problem for (\ref{J}) is closely related to a family of geometric evolution problems, modeling the spatial control 
of an invasive population 
\cite{BCS1, BCS2, BMS}. 
See \cite{BMN, BZ, CPo, CLP} for other control problems related to moving sets,
and \cite{ABM, BuBu} for more general geometric optimization problems.
 
For $t\in [0,T]$, we denote by $t\mapsto \Omega(t)\subseteq V$ a moving set. 
This can be regarded as a ``contaminated region", to be reduced as 
much as possible.   
We think of $V\subset\R^2$ as a geographical constraint, say, an island beyond which the invasive population cannot propagate.

To control the evolution of this set, we assign the 
velocity $\beta=\beta(t,x)$ in the inward normal direction at every point $x\in \partial \Omega(t)\cap V$
of the relative boundary.
A function $E(\beta)\geq 0$ is given, 
describing the {\bf effort} needed to push the boundary of $\Omega(t)$ inward, with speed $\beta$ in the 
normal direction (see Fig.~\ref{f:csm18}, left). 
 The {\bf total control effort} at time $t\in [0,T]$
is then defined as
\bel{Et} \E(t)~\doteq~
\int_{\partial\Omega(t)\cap V}E\bigl(\beta(t,x)\bigr)\, \H^1(dx),
\eeq
where the integral is computed w.r.t.~the 1-dimensional Hausdorff measure
along the relative boundary of $\Omega(t)$. 

Here we focus on the case where the effort function  is
\bel{E} E^\ve(\beta)~=~\max\bigl\{ 0, \ve+\beta\bigr\}\eeq
for some $\ve>0$.
Given a constant $M>0$ accounting for the maximum control effort, we consider set motions
$t\mapsto \Omega(t)$
which satisfy the constraint on the total control effort
\bel{EM} \E^\ve(t)~\doteq~
\int_{\partial\Omega(t)\cap V}E^\ve\bigl(\beta(t,x)\bigr)\, \H^1(dx)~
\leq~M\qquad\forall ~t\in [0,T].\eeq

\begin{figure}[ht]
\centerline{\hbox{\includegraphics[width=13cm]{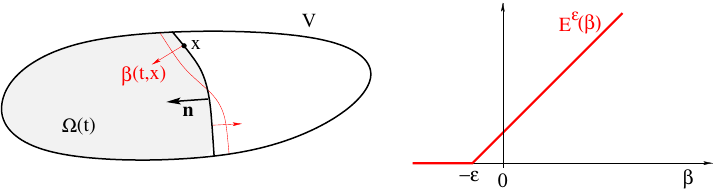}}}
\caption{\small Left: a moving set, where the evolution is determined by assigning the inward normal speed $\beta$
at each  point $x\in \partial\Omega(t)\cap V$ in the relative boundary.
Right: the effort function $E^\ve(\beta)$ in (\ref{E}).
}
\label{f:csm18}
\end{figure}

This models a situation where:
\begi
\item In absence of control, i.e.~if the control effort is everywhere zero: $E^\ve(\beta)=0$, then the inward normal speed is $\beta= -\ve$
at every point. 
Hence the contaminated set $\Omega(t)$ expands with speed $\ve$ in all directions.
In particular, its area increases at a rate proportional to the relative perimeter:
$${d\over dt} \caL^2\bigl(\Omega(t)\bigr)~=~- \int_{\partial \Omega(t)\cap V} \beta(t,x) \, \H^1(dx)~=~ \ve\,
\H^1\bigl(\partial \Omega(t)\cap V\bigr).$$
\item By implementing a control with total effort  $\E^\ve(t)= M$, we can clean up a region of area $M$ per unit time.   Assuming $\beta(t,x)\geq -\ve$ at every boundary point, the area of the contaminated set thus evolves according to:
$$\bega{l}\ds{d\over dt} \caL^2\bigl(\Omega(t)\bigr)~=\,-\int_{\partial \Omega(t)\cap V}  \beta(t,x) \, \H^1(dx)\\[4mm] \qquad =~\ds
\int_{\partial \Omega(t)\cap V} \Big[\ve - E^\ve\bigl(\beta(t,x)\bigr)\Big] \, \H^1(dx)~=~\ve\,\H^1\bigl(\partial \Omega(t)\cap V\bigr)-M.\enda$$
\endi
Having fixed the constants $M, \ve>0$, we say that 
 the set motion $t\mapsto\Omega(t)\subseteq V$ is {\bf admissible} if the control effort $\E^\ve(t)$ satisfies
 (\ref{EM}) at every time $t\in [0,T]$.
In the above setting, two problems can be formulated.

\begi
\item[{\bf (EP)}] {\bf Eradication Problem.}  {\it Given a bounded open set $V\subset\R^2$ and constants $M,\ve>0$, 
find an admissible set-valued function $t\mapsto \Omega(t)\subseteq V$  such that, for some $T>0$,
\bel{nco}\Omega(0)~=~V,\qquad\qquad 
\Omega(T)~=~\emptyset.\eeq
}
\endi

\begi
\item[{\bf (MTP)}]  {\bf Minimum Time Problem.}  {\it  Among all admissible strategies that satisfy
(\ref{nco}), find one which minimizes the time $T$.}
\endi
The analysis in \cite{BMS} has established the existence of optimal solutions,
together with necessary conditions and sufficient conditions for optimality.

In the present paper we begin by proving that, for any open set $V$ with finite perimeter,  there exists 
an optimal slicing (see Theorem~\ref{t:21}).

In Section~\ref{sec:3} we establish a connection between the two problems 
{\bf (OSP)} and {\bf (MTP)}.  Namely, let a bounded open set $V\subset\R^2$ be given, with 
$T\doteq \caL^2(V)$.   Fix $M=1$ and, for any $\ve>0$ small enough,
let $t\mapsto \Omega_\ve(t)$, $t\in [0, T_\ve]$ be a time optimal eradication strategy for the set $V$, 
with effort function  $E^\ve(\beta)\doteq\max\{0, \ve+\beta\}$.
By choosing a subsequence $(\ve_n)_{n\geq 1}$, we then prove the convergence $\Omega_{\ve_n}(t)\to \Omega^\sharp(t)$ for every $t\in [0,T]$, for some set-valued map $\Omega^\sharp$.  Moreover, after inverting time, 
we show that the map $t\mapsto \Omega^\sharp(T-t)$ provides an optimal slicing of the set $V$.

The remaining sections of this paper are concerned with  necessary conditions for the 
optimality of a slicing strategy.  
Assuming that the  the boundaries $\partial \Omega(t)$ admit a $\C^{1,1}$ parameterization
(continuously differentiable with Lipschitz derivative), in
Section~\ref{sec:4} we derive optimality conditions valid in the interior of $V$.
One should be aware, however, that for an optimal slicing strategy
the $\C^{1,1}$ regularity of the boundaries has been conjectured but not yet proved. 
As a step in this direction, in Section~\ref{sec:5} we show that 
the boundaries of the sets $\Omega(t)$ cannot have corners. Finally, in Section~\ref{sec:6}
we derive further optimality conditions at points $P(t)$ 
where the relative boundary of the moving set $\Omega(t)$ meets the boundary of $V$.

For particular geometric shapes, slicing strategies determined by the optimality conditions
are described in the forthcoming paper \cite{BCM3}.

\section{Existence of an optimal slicing strategy}
\label{sec:2}
\setcounter{equation}{0}
In this section we prove the existence of an optimal slicing.

\begin{theorem}
\label{t:21}
Every bounded open set $V\subset\R^2$  with finite perimeter admits a
slicing $t\mapsto \Omega(t)$ with minimum cost.
\end{theorem}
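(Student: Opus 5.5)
We apply the direct method of the calculus of variations, taking a minimizing sequence of slicings and extracting a pointwise-in-time limit by compactness in BV. First observe that the infimum is finite: since $V$ has finite perimeter, for instance the slicing $\Omega(t)=V\cap\{x_1<s(t)\}$, with $s(t)$ chosen so that $\caL^2(\Omega(t))=t$, has cost bounded by $T$ times a bound on the length of vertical chords. By the coarea formula, $\int \H^1(V\cap\{x_1=s\})\,ds=\caL^2(V)<\infty$, and a change of variables $dt=\H^1(V\cap\{x_1=s\})\,ds$ then shows that $J$ is finite. Let $\Omega_n(\cdot)$ be a minimizing sequence with $J(\Omega_n)\to J^*=\inf J$. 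To make the boundary term well behaved we interpret $\H^1(\partial\Omega\cap V)$ through the relative perimeter $P(\Omega;V)\le \H^1(\partial\Omega\cap V)$. This means either proving existence in the relaxed class and then choosing good representatives, or noting that replacing each $\Omega(t)$ by its set of density-one points (which preserves monotonicity and measure) does not increase the cost.

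The key step is a Helly-type selection. Monotonicity is essential here: it makes each $t\mapsto \Omega_n(t)$ a monotone family of sets, i.e. $\chi_{\Omega_n(t)}(x)$ is nondecreasing in $t$. Equivalently, introduce the ``arrival time'' functions $u_n(x)=\inf\{t:\ x\in\Omega_n(t)\}$, so that $\Omega_n(t)$ coincides up to null sets with the sublevel set $\{u_n<t\}$, and $\caL^2(\{u_n<t\})=t$ says $u_n$ has the uniform distribution on $[0,T]$. By the coarea formula for BV functions, $J(\Omega_n)\ge\int_0^T P(\{u_n<t\};V)\,dt=|Du_n|(V)$. Hence $(u_n)$ is bounded in $BV(V)$, with $0\le u_n\le T$, and by compactness a subsequence converges in $L^1(V)$ to some $u\in BV(V)$. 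Since all $u_n$ have the same distribution, $u$ is also uniformly distributed on $[0,T]$; distributional convergence follows from $L^1$ convergence. Define $\Omega(t)=\{u<t\}$, taking the open representative or the density-one points. This family is monotone and satisfies $\caL^2(\Omega(t))=t$ exactly, because the distribution of $u$ has no atoms.

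Lower semicontinuity follows next: $|Du|(V)\le\liminf|Du_n|(V)\le J^*$, and the coarea formula gives $|Du|(V)=\int_0^T P(\{u<t\};V)\,dt$. It remains to show that $J(\Omega)$, computed with $\H^1$ of topological boundaries, equals this relaxed value for a suitable choice of representatives. I expect this to be the main obstacle. The natural route is to prove that for a.e. $t$ one can take $\Omega(t)$ with $\H^1(\partial\Omega(t)\cap V)=P(\Omega(t);V)$ while preserving monotonicity. For example, one can note that the infimum of $J$ over genuine slicings equals the relaxed infimum, by approximating the $u_n$ with smooth functions whose level sets have smooth boundaries. After that, one can either show that a relaxed minimizer has sublevel sets solving local perimeter problems, which are therefore regular enough (essentially closed boundaries) that the topological boundary agrees with the reduced boundary up to $\H^1$-null sets, or simply define the cost through the perimeter, as is customary. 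I would first try the regularity route: for a.e. $t$, $\Omega(t)$ is locally perimeter-minimizing away from where the constraint is active, and density estimates make $\partial\Omega(t)$ equal the closure of the reduced boundary with no extra $\H^1$ mass.
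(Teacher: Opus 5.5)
Your main argument is correct, and it takes a genuinely different route from the paper's. The paper keeps the time-parametrized family. It truncates each $\Omega_n$ on the open set of times where $\H^1(\partial\Omega_n(t)\cap V)>2^i$, interpolating with half-planes, to get perimeter bounds at each time. It then applies Ascoli's theorem to the $1$-Lipschitz maps $t\mapsto{\bf 1}_{\Omega_n^i(t)}\in L^1$, and needs a diagonal extraction plus Borel--Cantelli to undo the truncations before concluding with lower semicontinuity and Fatou's lemma.

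Your arrival-time function $u_n$ encodes the whole slicing in a single function, and the coarea formula turns the time-integrated perimeter into $|Du_n|(V)$. The cost bound is therefore already a compactness bound, so no truncation or diagonal argument is needed. The exact constraints survive in the limit because $L^1$ convergence preserves the atomless uniform distribution of the $u_n$.

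One detail to state explicitly: on an arbitrary bounded open set, BV compactness only gives $L^1_{\rm loc}(V)$ convergence. The bound $0\le u_n\le T$ upgrades this to $L^1(V)$. So your route does not even use the hypothesis that $V$ has finite perimeter. It also shows the relaxed problem in its natural form: minimize $|Du|(V)$ over $u:V\to[0,T]$ whose distribution is uniform on $[0,T]$.

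As for the step you call the main obstacle, drop the regularity route; it does not work as sketched.
\begin{itemize}
\item Since $\caL^2\bigl(\Omega(t+h)\setminus\Omega(t-h)\bigr)=2h$, each slice is pinned between $\bigcup_{s<t}\Omega(s)$ and $\bigcap_{s>t}\Omega(s)$, which have the same measure $t$.
\item Hence no single slice can be varied on its own. Admissible competitors must modify the family on a whole time interval, as in Sections 4--6.
\item So the claim that a.e.\ slice is locally perimeter-minimizing has no basis, and no density estimates are available. The paper itself says regularity of the boundaries is only conjectured.
\end{itemize}
Choosing representatives cannot help in general either. Take discs $B(x_k,r_k)$ with $\{x_k\}$ dense in a ball $B\subset V$ and $\sum r_k$ small. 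Their union has finite perimeter, yet every representative has a topological boundary of positive area inside $B$.

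The paper does not overcome this. Its Step 6 applies lower semicontinuity of the perimeter directly to $\H^1(V\cap\partial\widehat\Omega(t))$. That is legitimate only if this quantity is read as the relative perimeter $P(\cdot\,;V)$, which matches the finite-perimeter framework of Section 3.

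So commit to your option (c): define the cost through $P(\Omega(t);V)$ and take a minimizing sequence for that cost, for which $|Du_n|(V)$ equals the cost exactly. With that, your argument is complete and proves exactly what the paper's proof proves.
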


{\bf Proof.} {\bf 1.} We first show that a slicing with bounded cost exists.
Indeed, fix any unit vector $\bfe\in \R^2$ and  consider the sets
$$\Omega^\tau~\doteq~\bigl\{ x\in V\,;~~\langle \bfe, x\rangle <\tau\bigr\}.$$
Clearly,  the map $\tau\mapsto \psi(\tau)\doteq\caL^2(\Omega^\tau)$ is continuous and nondecreasing. 

By assumption, the set $V$ is bounded: $V\subseteq B(0,R)$ for some radius $R>0$.
This implies
$$0=\psi(-R)~<~\psi(R)~=~\caL^2(V)~\doteq ~T~<~\pi R^2.$$
Let $\tau\mapsto t(\tau)$  be a generalized inverse of $\psi$, i.e., an
increasing map such that $\psi(t(\tau))=t$ for every $t\in [0,T]$.
Then the set-valued function
$$t~\mapsto~\Omega^{\tau(t)},\qquad\qquad t\in [0,T],$$ is a slicing of the set $V$.
Moreover, each slice has measure $\H^1\bigl(V\cap\partial \Omega^{t(\tau)}\bigr)\leq 2R$.
Therefore, this slicing has cost $\leq 2R\, T$.
\v
{\bf 2.} Let   $m$ be the infimum among all slicing costs.
We can then construct a minimizing sequence of slicings, say $(\Omega_n)_{n\geq 1}\,$, with
\bel{minseq} m+1~>~J(\Omega_n)~\doteq~\int_0^T \H^1\bigl( V\cap \partial \Omega_n(t)\bigr)\, dt~~\to~~m.\eeq

We denote by ${\bf 1}_A$ the characteristic function of a set $A\subset \R^2$
By the properties (\ref{1})-(\ref{2}), the maps
$$t\mapsto {\bf 1}_{\Omega_n(t)}$$
from $[0,T]$ into $\L^2(\R^2)$ are all Lipschitz continuous with Lipschitz constant 1.

We would like to use Ascoli's theorem to extract a uniformly convergent subsequence.
However, this  is not straightforward, because the ranges of these maps may not be all contained
in a  compact set of $\L^1(\R^2)$.    The following steps cope with this problem.
\v
{\bf 3.} For $i\geq 1$, let $r_i\doteq 2^i$.   
We now modify each strategy $\Omega_n$, so that its values have uniformly bounded perimeter.

Observe that the map
$$t~\mapsto ~\psi_n(t)~\doteq~\H^1\bigl(V\cap\partial \Omega_n(t)\bigr) $$
is lower semicontinuous.  Hence its upper level sets are open, i.e., countable unions of open intervals. In particular,
$$\Big\{t\in \,]0,T[\,\,;~ \psi_n(t)>r_i\Big\}~=~\bigcup_j I_j\,,\qquad\qquad I_j=\,]a_j, b_j[\,.$$
Moreover, $$\psi_n(a_j)~\leq ~r_i,\qquad\quad \psi_n(b_j)~\leq ~r_i\,.$$
The modified  slicing strategy $t\mapsto \Omega_n^i(t)$ is defined as follows.   Fix a unit vector $\bfe\in\R^2$ and set
$$
\Omega_n^i(t)~\doteq~\Omega_n(t)\qquad \hbox{if} ~\psi(t)~\leq~r_i.
$$
On the other hand, if $t\in \,]a_j, b_j[\,$ for some $j$, we define $\Omega_n^i(t)$ as a suitable interpolation between the sets
$\Omega_n(a_j)$ and $\Omega_n(b_j)$.   More precisely (see Fig.~\ref{f:sli14})
\bel{omni}\Omega_n^i(t)~\doteq~\Omega_n(a_j)\cup \Big( \Omega_n(b_j)\cap \bigl\{ x\in\R^2\,;~~\langle \bfe, x\rangle \leq s_j(t)\bigr\}\Big),\eeq
where the increasing function $s_j(t)$ is determined by the area constraint 
$$\caL^2\bigl(\Omega_n^i(t)\bigr)~=~t.$$

The length of the relative perimeter of this set is estimated by
$$\bega{rl}
\H^1\bigl( V\cap \partial \Omega_n^i(t)\bigr)&\leq~\H^1\bigl( V\cap \partial \Omega_n(a_j)\bigr)+\H^1\bigl( V\cap \partial \Omega_n(a_j)\bigr)+\H^1\Big(\bigl\{  x\in V\,;~\langle \bfe, x\rangle= s_j(t)\bigr\}\Big)\\[3mm]
&\leq~r_i+r_i+\hbox{diam}(V)~\leq~2^{i+1} + 2R.\enda
$$
Since $V$ itself has bounded perimeter, this implies that all the sets $\Omega_n^i(t)$ have uniformly
bounded perimeter.   Moreover, by the integral bound in (\ref{minseq}) 
it follows
\bel{mess}\bega{rl}\meas\Big(\bigl\{ t\in [0,T]\,;~~\Omega_n^i(t)\not= \Omega_n(t)\bigr\}\Big)&=~\meas\Big(\bigl\{ t\in [0,T]\,;~~\H^1(V\cap \partial \Omega_n(t)) > r_i
\bigr\} \Big)\\[2mm]&\ds
<~{C\over  r_i}
~=~2^{-i}\, C.\enda\eeq

\begin{figure}[ht]
\centerline{\hbox{\includegraphics[width=9cm]{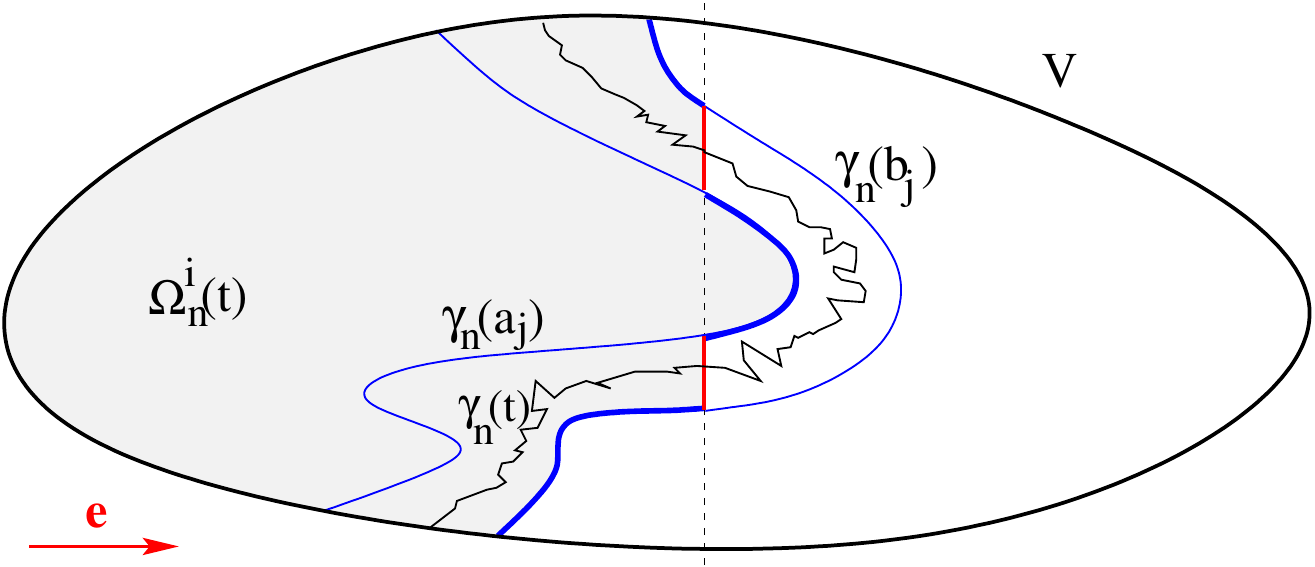}}}
\caption{\small The shaded region is the set $\Omega_n^i(t)$ defined at (\ref{omni}).  
Even if for some $t\in \,]a_j, b_j[\,$ the length
of the relative boundary $\gamma_n(t)\doteq V\cap \partial \Omega_n(t)$ is unbounded, all the curves $\gamma_n^i(t)\doteq V\cap \partial \Omega_n^i(t)$ have uniformly bounded length.
}
\label{f:sli14}
\end{figure}

\v
{\bf 4.}
 For every fixed $i\geq 1$, by construction all  the sets $\Omega^i_n$, $n\geq 1$, have uniformly bounded perimeter. Hence their characteristic functions lie in a compact subset of $\L^1(\R^2)$
 and we can thus apply Ascoli's theorem to the sequence $(\Omega_n^i)_{n\geq 1}$.
We proceed by induction on $i$.

 Let $n(1,p)$, $p\geq 1$, be a subsequence such that the limit exists, namely
 $$ \Big\| {\bf 1}_{\Omega^1_{n(1,p)}(t)}- {\bf 1}_{\Tilde \Omega_1(t)}\Big\|_{\L^1(\R^2)}~\leq~{1\over p} \qquad\forall t\in [0,T], ~~p\geq 1,$$
 for some  multifunction $t\mapsto \Tilde\Omega_1(t)$.
 
By induction, given the sequence $n(i-1,p)$, by Ascoli's theorem we can extract a further subsequence $n(i,p)$, $p\geq 1$, such that 
 $$ \Big\| {\bf 1}_{\Omega^i_{n(i,p)}(t)}- {\bf 1}_{\Tilde \Omega_i(t)}\Big\|_{\L^1(\R^2)}~\leq~{1\over p} \qquad\forall t\in [0,T], ~~p\geq 1.$$
 
 Now consider the diagonal sequence
 $\Omega^i_{n(i,i)}$.   By (\ref{mess}) we have
 $$\meas\Big(\bigl\{ t\in [0,T]\,;~~\Omega_{n(i,i)}^i(t)\not= \Omega_{n(i,i)}(t)\bigr\}\Big)
~\leq C\,2^{-i}.$$
 Since the series $\sum_i C\, 2^{-i}$ is bounded, by the Borel-Cantelli lemma it follows
 that there is a set of times ${\cal N}\subset [0,T]$ of measure zero such that
 $$t\notin{\cal N}\qquad\qquad \implies\qquad \Omega_{n(i,i)}^i(t)= \Omega_{n(i,i)}(t)$$
 for all except finitely many indices $i$.  
 
 Hence, for all $t\notin {\cal N}$, the limit 
 \bel{limoi}\lim_{i\to\infty} {\bf 1}_{\Omega_{n(i,i)}}(t)~=~{\bf 1}_{\Hat\Omega(t)}\eeq
 is well defined.   Since all functions $t\mapsto {\bf 1}_{\Omega_{n(i,i)}}(t)\in \L^1(\R^2) $ 
 are uniformly Lipschitz continuous with Lipschitz constant 1, the convergence holds uniformly 
 for all $t\in [0,T]$.
 
 Since each map $t\mapsto \Omega_{n(i,i)}(t)$ is a slicing of $V$, by the above 
 uniform convergence the limit map $t\mapsto \Hat\Omega(t)$ is a slicing as well.
 \v
 {\bf 6.} It remains to prove that $\Omega$ is optimal. 
 By the lower semicontinuity of the perimeter, from the limit (\ref{limoi}) it follows
 $$\H^1\bigl( V\cap\partial \Hat\Omega(t)\bigr)~\leq~\liminf_{i\to \infty} 
 \H^1\bigl( V\cap\partial \Omega_{n(i,i)}(t)\bigr)\qquad \quad\forall t\in [0,T].$$
Fatou's lemma thus yields
$$\int_0^T \H^1\bigl( V\cap\partial \Hat\Omega(t)\bigr)\, dt~\leq~\liminf_{i\to \infty}
\int_0^T \H^1\bigl( V\cap\partial \Omega_{n(i,i)}(t)\bigr)\, dt~=~m,$$
completing the proof.
\endproof

\section{Relations with time optimal eradication problems}
\label{sec:3}
\setcounter{equation}{0}

In the Introduction,  admissible motions were defined in terms of the constraints (\ref{Et})--(\ref{EM}).
Roughly speaking, the contaminated set $\Omega(t)$ expands with speed $\ve>0$ in all directions, while
the control removes a region of area $M$ per unit time.   Throughout the following,
to relate minimum time eradication problems with optimal slicing problems,
we fix the value $ M=1$.

In the regular case where the boundaries $\partial\Omega(t)$ admit a $\C^1$ parameterization,
the definitions of interior normal vector and of normal velocity $\beta$ in the inward direction are
clear.   However, in general the existence of optimal set motions can be achieved only within a family of sets with finite perimeter. In such case the normal vector is well defined only a.e.~w.r.t.~the Hausdorff measure.  As a preliminary, we recall here the precise definition of admissible motion, as introduced in \cite{BCS2}.

 Call $\F$ the family of all sets
$\Omega\subset [0,T]\times V\subset \R^3$ with finite perimeter.  Each $\Omega\in \F$ determines a 
set-motion
\bel{12}t~\mapsto~\Omega(t)~\doteq~\bigl\{ x\,;~~(t,x)\in \Omega\bigr\}.\eeq
For every point $(t,x)\in \partial^\star \Omega$ in the reduced boundary of $\Omega$ (see \cite{AFP, M} for 
a precise definition), 
let $\nu(t,x)= (\nu_0, \nu_1, \nu_2)\in \R^3$ be the (inward pointing) unit normal vector.
%

The (inward) normal velocity of the set 
$\Omega(t)$  at the point 
$(t,x)\in \partial^\star\Omega$ is then computed by
\bel{bdef}\beta~=~ {-\nu_0\over \sqrt{\nu_1^2 + \nu_2^2}}\,.\eeq
Choosing $E^\ve(\beta)\doteq\max\{ \ve+\beta, 0\}$,  the {\bf instantaneous total 
effort} is
$$\E^\ve(t)
~=~\int_{\partial\Omega(t)\cap V} E^\ve\left({-\nu_0\over \sqrt{\nu_1^2 + \nu_2^2}}\right)
\,\H^1(dx)
~=~\int_{\partial\Omega(t)\cap V} \max\left\{ {-\nu_0+ \ve\,\sqrt{\nu_1^2 + \nu_2^2}\over  \sqrt{\nu_1^2 + \nu_2^2}}\,,~0\right\}
\,\H^1(dx).$$
Therefore, integrating over a time interval $t\in \,]t_1,t_2[$ one finds
$$\int_{t_1}^{t_2} \E^\ve(t)\, dt ~=~\int_{\partial^*\Omega\cap\{(t,x);\, t_1<t<t_2\,,~x\in V\}} 
\max\Big\{ -\nu_0 +\ve\,\sqrt{\nu_1^2+\nu_2^2}\,,~0\Big\}\, d\H^2\,,$$
where $\H^2$ denotes the 2-dimensional Hausdorff measure.
To  impose the requirement that $\E^\ve(t)\leq 1$ for all $t$,    we consider the convex, positively homogeneous
function $L_\ve :\R^3\mapsto \R$, defined as
$$L_{\ve}(v) ~=~L_{\ve}(v_0,v_1,v_2)~\doteq~\max\Big\{ -v_0 +\ve\sqrt{v_1^2+v_2^2}\,,~0\Big\}.$$
In the following, $\H^2$ denotes the 2-dimensional Hausdorff measure.
\begin{definition}\label{d:13ve} Let  $V\subset\R^2$ be a bounded  open set with finite perimeter.
We say that  a set with finite perimeter $\Omega\in\F$ represents an  
$\ve$-{\bf admissible motion}, as in (\ref{12}), and write
$\Omega\in \A_{\ve}$, if for every $0\leq t_1<t_2\leq T$ one has
$$\int_{\partial^*\Omega\cap\{(t,x);\, t_1<t<t_2, \,x\in V\}} L_{\ve}\bigl(\nu(t,x)\bigr)\, d\H^2~\leq~t_2-t_1.$$
\end{definition}

\begin{lemma} \label{l:31}
For every $\ve>0$ small enough, the minimum time problem {\bf ($\ve$-MTP)} admits an optimal solution
$t\mapsto \Omega_\ve(t)$, $t\in [0, T_\ve]$.
\begi
\item[(i)]
As $\ve\to 0$ the minimum time $T_\ve$ decreases monotonically to $T=\caL^2(V)$.
\item[(ii)] By taking a suitable subsequence $\ve_n\to 0$,
one obtains the convergence 
\bel{cvg}\lim_{n\to \infty} \Big\| {\bf 1}_{\Omega_{\ve_n}} - {\bf 1}_{\Omega^\sharp}\Big\|_{\L^1\bigl(]0,T[\,\times\R^2\bigr)}~=~0,\eeq
for a set $\Omega^\sharp\subset [0,T]\times \R^2$ with finite perimeter.
 \endi
\end{lemma}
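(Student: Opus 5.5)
We will treat existence as known from \cite{BMS, BCS2}: for fixed $\ve>0$ the family $\A_\ve$ is closed under $\L^1$ convergence, because the functional $\Omega\mapsto\int_{\partial^*\Omega\cap\{t_1<t<t_2\}} L_\ve(\nu)\,d\H^2$ is lower semicontinuous ($L_\ve$ is convex and positively homogeneous, so Reshetnyak's theorem applies). Hence a minimizing sequence of eradication strategies with uniformly bounded perimeter in $\R^3$ has a limit which is again admissible and eradicates in minimum time. The one real ingredient is a \emph{feasible} strategy, needed both to show $T_\ve<\infty$ and to bound the perimeter. For this we reverse the slicing of Theorem~\ref{t:21}, Step~1: set $\Omega(t)=\Omega^{\tau(T'-t)}$, a half-plane cut of $V$ whose area decreases at a constant rate $c<1$. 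Its relative boundary has length $\leq 2R$, so the effort is about $c+2R\ve\leq 1$ once $\ve\leq(1-c)/2R$. This is where ``$\ve$ small enough'' enters, and it yields $T_\ve<\infty$.

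For (i) we use the area balance. For any admissible strategy,
$$\frac{d}{dt}\caL^2(\Omega(t))~\geq~\ve\,\H^1(\partial\Omega(t)\cap V)-1~\geq~-1.$$
Integrating from $0$ to $T_\ve$ gives $T_\ve\geq \caL^2(V)=T$. Monotonicity holds because $E^{\ve'}\leq E^\ve$ when $\ve'<\ve$, so $\A_\ve\subset\A_{\ve'}$ and $T_{\ve'}\leq T_\ve$.

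To show $T_\ve\to T$ we take any slicing $\Omega(\cdot)$ with finite cost $J(\Omega)$, which exists by Theorem~\ref{t:21}, and run it backward with a time change. The area must decrease at rate $1-\ve\,\H^1(\partial\Omega\cap V)$, so we define $s(t)$ by
$$\dot s~=~-\bigl(1-\ve\,\psi(s)\bigr),\qquad \psi(s)=\H^1\bigl(V\cap\partial\Omega(s)\bigr).$$
The total time is then $\int_0^T ds/(1-\ve\psi(s))$, which tends to $T$ as $\ve\to0$, since $\psi\in \L^1$ and $1/(1-x)\approx 1+x$. This holds provided $\psi$ is bounded; if it is not, we truncate $\psi$ as in Step~3 of Theorem~\ref{t:21}. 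I expect the main obstacle to be justifying this time-reparametrized slicing as an element of $\A_\ve$ in the finite-perimeter sense of Definition~\ref{d:13ve}. That requires computing $L_\ve(\nu)$ on $\partial^*\Omega$ via the coarea formula, and handling the portion of the boundary where the normal speed is below $-\ve$ or is degenerate. I would first prove it for slicings with bounded $\psi$, then pass to general ones by the truncation of Theorem~\ref{t:21}, Step~3.

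For (ii) we need compactness. Extend each $\Omega_\ve$ to $[0,T]$ (by $\emptyset$ beyond $T_\ve$, or by restriction) and bound its perimeter in $]0,T[\times\R^2$. The perimeter splits into a space part and a time part:
\begin{itemize}
\item the space part is $\int \H^1(\partial\Omega_\ve(t))\,dt \leq \int (\H^1(\partial\Omega_\ve(t)\cap V)+\H^1(\partial V))\,dt$;
\item the time part, by monotonicity of the optimal set motion (it shrinks in time, as in \cite{BMS}), is bounded by $\caL^2(V)$ plus the jump terms.
\end{itemize}
The critical bound is $\int_0^{T_\ve}\ve\,\H^1(\partial\Omega_\ve\cap V)\,dt\leq T_\ve-T\to 0$, which follows from the area balance. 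This alone gives only $o(1/\ve)$, so it does not bound the relative perimeter. Instead I would compare with a fixed slicing. Optimality of $T_\ve$ together with $T_\ve\leq T+\ve J(\Omega)+o(\ve)$ from the construction above yields
$$\int \H^1(\partial\Omega_\ve(t)\cap V)\,dt~\leq~J(\Omega)+o(1),$$
a uniform bound. BV compactness in $\R^3$ then gives a subsequence converging in $\L^1$ to some $\Omega^\sharp$ with finite perimeter, by lower semicontinuity.
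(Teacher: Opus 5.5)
\textbf{Gap: the time component of the perimeter bound in (ii).} In part (ii) you bound the time component $\int_{\partial^*\Omega_\ve}|\nu_0|\,d\H^2$ of the perimeter by claiming that the optimal motion $t\mapsto\Omega_\ve(t)$ shrinks monotonically. Your argument does not prove this, and it cannot simply be imported from \cite{BMS}. An $\ve$-admissible motion is in general not monotone: wherever the effort $E^\ve(\beta)$ vanishes, the relative boundary advances with normal speed $\ve$, so $\nu_0>0$ there. Nothing in the optimality of $\Omega_\ve$ rules this out, since the time $T_\ve$ is governed only by the area balance, not by where the effort is spent. Without monotonicity, the Fubini argument giving $\int|\nu_0|\,d\H^2\le\caL^2(V)$ is unavailable. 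Nothing else in your proposal controls the expanding part of $\partial^*\Omega_\ve$, so the uniform perimeter bound, and hence the compactness in (ii), is not established.

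\textbf{How to fix it.} You need to combine the admissibility constraint with the divergence theorem; this is exactly what the paper does with the sets $\Sigma^1_\ve,\Sigma^2_\ve,\Sigma^3_\ve$. Since $L_\ve(\nu)\ge(\nu_0)^-$, Definition~\ref{d:13ve} applied with $t_1=0$, $t_2=T_\ve$ gives $\int(\nu_0)^-\,d\H^2\le T_\ve$ over $\partial^*\Omega_\ve\cap(]0,T_\ve[\times V)$. Over the same set one has $\int\nu_0\,d\H^2=-\caL^2(V)$, hence also $\int(\nu_0)^+\,d\H^2\le T_\ve$. Together with your bound on the space component $\int|\nu'|\,d\H^2$, this gives the uniform perimeter bound. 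The same estimate is also what makes minimizing sequences precompact in your existence sketch.

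\textbf{Where your route differs from the paper.} The rest of your proposal is sound, and in one respect it takes a different route. You get the uniform bound on $\int_0^{T_\ve}\H^1(\partial\Omega_\ve(t)\cap V)\,dt$ from $\ve\int\H^1\le T_\ve-T\le \ve J(\Omega)+O(\ve^2)$, using a time-changed, reversed slicing whose relative boundaries have bounded length. The half-plane slicing already suffices for this, so no truncation is needed. The paper instead infers the bound from the monotonicity in $\ve$ of these integrals. Your version is more explicit, and it also gives $\limsup_{\ve\to0}\int\H^1\le J(\Omega)$.
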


 {\bf Proof.} {\bf 1.}  For $\ve>0$ small enough, the eradication problem can be solved.  By the results in 
\cite{BMS} an optimal strategy $t\mapsto \Omega_\ve(t)$ exists.
The minimum time $T_\ve$ satisfies the identity
\bel{Tep}
T_\ve~=~\caL^2(V) + \ve \int_0^{T_\ve} \H^1\bigl( \partial \Omega_\ve(t)\cap V\bigr)\, dt.\eeq
Therefore, a strategy $t\mapsto\Omega_\ve(t)$ is optimal if and only if 
$$\int_0^{T_\ve} \H^1\bigl( \partial \Omega_\ve(t)\cap V\bigr)\, dt~=~\inf\left\{
\int_0^{T_\ve} \H^1\bigl( \partial \Omega(t)\cap V\bigr)\, dt~;\quad\Omega~\hbox{is $\ve$-admissible}\right\}.
$$
\v
{\bf 2.}  For every $\ve>0$ small enough, let $\Omega_\ve(\cdot)$ be an $\ve$-optimal eradication strategy and call $T_\ve$ the corresponding minimum time.
Notice that, if $0<\ve_1<\ve_2$, then $\Omega_{\ve_2}$ is also $\ve_1$-admissible.  This implies
$T_{\ve_1}\leq T_{\ve_2}$.   Hence by monotonicity there exists the limit:
$$T~=~\inf_{\ve>0} T_\ve~=~\lim_{\ve\to 0} T_\ve\,.$$
Another consequence of this monotonicity is that the integrals 
\bel{avp} \int_0^{T_\ve} \H^1\bigl( \partial \Omega_\ve(t)\cap V\bigr)\, dt\eeq
do not increase as $\ve\downarrow 0$.  In particular, they are uniformly bounded.
By (\ref{Tep}) we thus conclude
$$T~\doteq~\lim_{\ve\to 0} T_\ve~=~\caL^2(V),$$
completing the proof of (i).
\v
{\bf 3.} To prove (ii) we observe that, if $\Omega_\ve(\cdot)$ is a family of optimal strategies,
the integrals (\ref{avp}) are uniformly bounded.
This yields a uniform bound on the perimeters of the sets $\Omega_\ve$.

For every $\ve>0$, we write the reduced boundary as
$$
\partial^*\Omega_\ve \cap \bigl(]0,T_\ve[\times V\bigr)~=~\Sigma_{\ve}^1\cup\Sigma_{\ve}^2\cup\Sigma_{\ve}^3\,.
$$
Calling $\nu=(\nu_0, \nu_1,\nu_2)$ the unit inner normal vector at the point
$(t,x)\in \partial^*\Omega_{\ve}\cap V$, and defining 
the inner normal velocity $\beta_{\ve}=\beta_{\ve}(t,x)$ as in (\ref{bdef}), we consider 
the sets $\Sigma_{\ve}^i$, $i=1,2,3$,   defined as
$$\Sigma_{\ve}^1\,=\,\left\{(t,x)\in \partial^*\Omega_\ve \cap \bigl(]0,T_\ve[\times V\bigr) : |\nu_0(t,x)|\leq{1\over2}\right\},$$
$$\Sigma_{\ve}^2\,=\,\left\{(t,x)\in \partial^*\Omega_\ve \cap \bigl(]0,T_\ve[\times V\bigr) : \nu_0(t,x)<-{1\over2}\right\},$$
$$\Sigma_{\ve}^3\,=\,\left\{(t,x)\in \partial^*\Omega_\ve \cap \bigl(]0,T_\ve[\times V\bigr) : \nu_0(t,x)>{1\over2}\right\}.$$
The uniform bound of the integrals (\ref{avp}) yields
\bel{Sve1}
{\sqrt3\over2}\dint_{\Sigma_{\ve}^1}  \,d\H^2~\leq~\dint_{\Sigma_{\ve}^1}\sqrt{\nu_1^2+\nu_2^2} \,d\H^2
~\leq~ \int_0^{T_\ve} \H^1\bigl( \partial \Omega_\ve(t)\cap V\bigr)\, dt~\leq~ C_1 \eeq
for some constant $C_1$.
By the $\ve$-admissibility of $\Omega_{\ve}$, we obtain
\bel{Sve2}
 {1\over2}\dint_{\Sigma_{\ve}^2}\,d\H^2~\leq~
\dint_{\Sigma_{\ve}^2} L_{\ve}(\nu)\,d\H^2~\leq~\dint_{\partial^*\Omega_\ve \cap \bigl(]0,T_\ve[\times V\bigr)} L_{\ve}(\nu)\,d\H^2~\leq~ T_{\ve}~\leq ~C_2\eeq
for some constant $C_2$. 
On the other hand, 
$$\bega{l}
\ds-\caL^2(V)~=~\int_0^{T_{\ve}} \int_{\partial \Omega_{\ve}(t)} -\beta(t,x) \H^1(dx)\, dt   ~
=~\int_0^{T_{\ve}} \int_{\partial \Omega(t)} {\nu_0\over\sqrt{\nu_1^2+\nu_2^2}} \H^1(dx)\, dt  \\[4mm]
 \quad \ds
=~\int_{\Sigma_{\ve}^1\cup\Sigma_{\ve}^2\cup\Sigma_{\ve}^3} \nu_0\,d\H^2 
~ \geq~{1\over 2}  \int_{\Sigma_{\ve}^3}
d\H^2 - \int_{\Sigma_{\ve}^1\cup\Sigma_{\ve}^2} d\H^2\,,
\enda
$$
hence
$\H^2(\Sigma_{\ve}^3)~\leq~ 2\H^2(\Sigma_{\ve}^1\cup\Sigma_{\ve}^2)~\leq~ C_3$, for some constant $C_3$. 

Combining (\ref{Sve1}) and (\ref{Sve2}), we obtain a
uniform bound on the 2-dimensional measure $\H^2\Big(\partial^* \Omega_n \cap \bigl(]0,T[\times V\bigr)\Big)$
of the relative boundary of $\Omega_{\ve}$ inside $V$.  Since $V$ has  finite perimeter,  the characteristic functions ${\bf 1}_{\Omega_{\ve}}$ have uniformly 
bounded variation. 
A compactness argument (see Theorem~12.26 in \cite{M}) yields the existence of a subsequence
$\ve_n\to 0$ such that (\ref{cvg}) is satisfied, for some limit
set $\Omega^\sharp\in \F$ with finite perimeter.
\endproof
\v

Next, we wish to characterize the limit motion $t\mapsto\Omega^\sharp(t)$.

%

\begin{theorem}\label{t:31} 
Let $t\mapsto \Omega^\sharp(t)$, $t\in [0,T]$  be a limit motion obtained
as in (ii) of Lemma~\ref{l:31}.  Then 
$t\mapsto \Omega^\sharp(t)$ yields an optimal slicing of the set $V$
\end{theorem}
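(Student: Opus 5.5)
The plan has three parts. First, show that the limit motion, after inverting time, is a slicing. Second, show that its cost is at most $\liminf_{\ve\to 0}\int_0^{T_\ve}\H^1(\partial\Omega_\ve(t)\cap V)\,dt$. Third, show that this $\liminf$ is at most the cost of any slicing. As announced in the Introduction, the slicing is $t\mapsto \Omega^\sharp(T-t)$, since the eradication motion decreases from $V$ to $\emptyset$.

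\textbf{Step 1 (slicing property).} For each $\ve$, admissibility in Definition~\ref{d:13ve} together with $L_\ve\ge 0$ shows that the outward normal speed is bounded: the effort constraint gives
\[
\caL^2(\Omega_\ve(t_1))-\caL^2(\Omega_\ve(t_2))\;\le\; t_2-t_1 .
\]
On the other hand, the growth term is controlled by $\ve$ times the relative perimeter. Integrating in time, for $t_1<t_2$ we obtain
\[
\caL^2\bigl(\Omega_\ve(t_2)\setminus\Omega_\ve(t_1)\bigr)\;\le\;\ve\int_{t_1}^{t_2}\H^1\bigl(\partial\Omega_\ve(t)\cap V\bigr)\,dt\;\le\;\ve C_1 \;\to\;0 .
\]
So the motions are almost decreasing. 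Moreover, by (\ref{Tep}) and Lemma~\ref{l:31}(i), $\caL^2(\Omega_\ve(t))$ differs from $T_\ve-t$ by at most $\ve C_1$. Passing to the $\L^1$ limit (\ref{cvg}), we first get a.e.\ convergence in $t$ of the slices, and then convergence for every $t$ by the uniform Lipschitz bound on $t\mapsto {\bf 1}_{\Omega_\ve(t)}$ in $\L^1$. This yields monotonicity $\Omega^\sharp(t_2)\subset\Omega^\sharp(t_1)$ up to null sets and $\caL^2(\Omega^\sharp(t))=T-t$. After choosing suitable representatives, $t\mapsto\Omega^\sharp(T-t)$ satisfies (\ref{1})--(\ref{2}).

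\textbf{Step 2 (lower semicontinuity).} Exactly as in step 6 of the proof of Theorem~\ref{t:21}, lower semicontinuity of relative perimeter and Fatou's lemma give
\[
J(\Omega^\sharp)\;\le\;\liminf_n \int_0^{T_{\ve_n}}\H^1\bigl(\partial\Omega_{\ve_n}(t)\cap V\bigr)\,dt .
\]

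\textbf{Step 3 (upper bound via competitors), the main obstacle.} By Theorem~\ref{t:21}, or by density, it suffices to fix any slicing $\Omega(\cdot)$ with cost $J(\Omega)<\infty$ and construct, for each small $\ve$, an $\ve$-admissible eradication strategy whose cost is at most $J(\Omega)+o(1)$. By (\ref{Tep}), optimality of $\Omega_\ve$ then gives $\int\H^1(\partial\Omega_\ve\cap V)\le J(\Omega)+o(1)$, which closes the argument.

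The natural candidate is the reversed slicing run at a slightly adjusted speed, $\Omega_\ve(t)\doteq \Omega(\sigma_\ve(t))$ with $\sigma_\ve$ decreasing. Removal then occurs at rate $|\sigma'|$, while spontaneous growth at speed $\ve$ adds the cost $\ve\,\H^1$. We would choose $\sigma_\ve'=-1/(1+\ve\,\psi(\sigma))$ with $\psi(s)=\H^1(\partial\Omega(s)\cap V)$. The difficulty is that the set must genuinely follow the prescribed shapes despite the outward drift. Near the front the effort $E^\ve(\beta)=\ve+\beta$ exactly compensates the drift, so the constraint becomes
\[
\int\bigl(\ve+\beta\bigr)\,d\H^1\;\le\;1,\qquad\text{i.e.}\qquad |\sigma'|+\ve\,\psi(\sigma)\;\le\;1 .
\]
Two further points require care. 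First, $\psi$ may be unbounded on small sets of times. I would handle this by first truncating as in step 3 of Theorem~\ref{t:21}, which replaces $\Omega(\cdot)$ by slicings with bounded $\psi$ at small extra cost. Second, $\Omega(t)$ may be irregular, so that the pieces of boundary where $\beta<-\ve$ would be forced. If the sets are nested, however, the normal speed is nonnegative in the reversed direction, so $\beta\ge0>-\ve$ and $E^\ve=\ve+\beta$ holds exactly in the BV sense of Definition~\ref{d:13ve}. Verifying this in the finite-perimeter framework, via the coarea-type identity used in step 3 of Lemma~\ref{l:31}, is where I expect the technical work to lie. The resulting time is $T+O(\ve)$ and the cost is $\int\psi\,dt\,(1+O(\ve))$, which completes the argument.
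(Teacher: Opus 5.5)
Your route is the paper's. You show the limit is a slicing via the area identity (\ref{tep}) and near-monotonicity, bound its cost by the $\liminf$ of the $\ve$-costs through lower semicontinuity, and get the reverse bound by truncating a competitor to bounded perimeter (Step 3 of Theorem~\ref{t:21}) and running it backward at reduced speed, comparing times via (\ref{Tep}). Arguing directly instead of by contradiction, and getting monotonicity from $\caL^2(\Omega_\ve(t_2)\setminus\Omega_\ve(t_1))\le\ve C_1$ rather than from (\ref{omein}), are harmless variations; the volume estimate actually passes to the $\L^1$ limit more directly.

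However, your reparametrization violates the constraint you yourself wrote. With $|\sigma_\ve'|=1/(1+\ve\psi)$ one gets $|\sigma_\ve'|+\ve\psi=1+\ve^2\psi^2/(1+\ve\psi)>1$ wherever $\psi>0$, so the competitor is not $\ve$-admissible. The correct choice is $|\sigma_\ve'|=1-\ve\psi(\sigma_\ve)$. This requires $\ve\sup\psi<1$, which is exactly what the truncation is for. It gives completion time $\int_0^T ds/(1-\ve\psi(s))=T+\ve\int_0^T\psi/(1-\ve\psi)\,ds$, so the competitor's cost tends to that of the truncated slicing. The paper's Step~4 contains the same sign slip. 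The BV admissibility check you worry about is then routine. For a decreasing family $\nu_0\le 0$, so $L_\ve(\nu)=-\nu_0+\ve\sqrt{\nu_1^2+\nu_2^2}$, and the slicing formula bounds the effort on $[t_1,t_2]$ by $\int_{t_1}^{t_2}(|\sigma_\ve'|+\ve\psi)\,dt=t_2-t_1$.

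Second, the growth bound in your Step 1 does not follow from admissibility. Since $L_\ve(\nu)=0$ whenever $\nu_0\ge\ve\sqrt{\nu_1^2+\nu_2^2}$, expanding faster than $\ve$, or enlarging the set instantaneously, costs nothing. The bound has to come from optimality. For any admissible motion one has $\caL^2(\Omega(t_1))-\caL^2(\Omega(t_2))\le (t_2-t_1)-\ve\int_{t_1}^{t_2}\H^1\,dt$, with equality only if $\nu_0\le\ve\sqrt{\nu_1^2+\nu_2^2}$ a.e.\ on the reduced boundary. Since (\ref{Tep}) is an equality on $[0,T_\ve]$, equality must hold on every subinterval. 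This gives both your growth bound $\caL^2(\Omega_\ve(t_2)\setminus\Omega_\ve(t_1))\le\ve\int_{t_1}^{t_2}\H^1\,dt$ and your area formula, and it replaces the paper's ``w.l.o.g.'' assumption (\ref{omein}). Relatedly, $t\mapsto{\bf 1}_{\Omega_\ve(t)}$ is not uniformly Lipschitz in $\L^1$, only Lipschitz up to an additive $\ve C_1$ (removal is at most $t_2-t_1$, addition at most $\ve C_1$). This still suffices for convergence at every $t$.
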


\begin{remark} {\rm   Taking the limit as $\ve\to 0$ in (\ref{E}), we obtain a totally degenerate 
minimum time problem.  Namely: minimize the time $T$ among all set motions $t\mapsto \Omega(t)$
such that
$$\meas \bigl(\Omega(t)\setminus\Omega(s)\bigr)~\leq~t-s\qquad\forall 0<s<t,$$
$$\Omega(0)=\emptyset,\qquad\Omega(T)=V.$$

In this trivial case, any set-valued map $t\mapsto \Omega(t)$ 
which is strictly increasing with $\caL^2(\Omega(t))= t$
is an optimal solution of {\bf (MTP)}.  However, this map will not provide an optimal solution
to the slicing problem, in general.
}\end{remark}

  {\bf Proof of the theorem.} {\bf 1.}
Let $\ve>0$.   Then for any $t\in [0, T_\ve]$ the optimal strategy $\Omega_\ve$ 
satisfies the analog of (\ref{Tep}), namely
\bel{tep}
\caL^2\bigl(\Omega_\ve(t)\bigr) ~=~t+ \ve \int_0^t \H^1\bigl( \partial \Omega_\ve(\tau)\cap V\bigr)\, d\tau.\eeq
W.l.o.g.~we can assume that 
\bel{omein}\Omega_\ve(t)~\subseteq~B\bigl(\Omega_\ve(s),~\ve(t-s)\bigr)\qquad\qquad \forall 0\leq s<t.\eeq
Indeed, if (\ref{omein}) fails for some $s$, one can consider the alternative eradication strategy
$$\Tilde \Omega_\ve(t)~\doteq~\left\{ 
\bega{cl} \Omega_\ve(t)\quad &\hbox{for}~~t\leq s,\\[1mm]
\Omega_\ve(t)\cap B\bigl(\Omega_\ve(s),~\ve(t-s)\bigr)\quad &\hbox{for}~~t> s.\enda\right.$$
This provides another admissible strategy with less or equal cost.

As in the proof of Lemma~\ref{l:31}, for any $t\in]0,T[$, by
possibly taking a subsequence, as $\ve\to 0$ we have
$$\Big\| {\bf 1}_{\Omega_{\ve}} - {\bf 1}_{\Omega^\sharp}\Big\|_{\L^1\bigl(]0,t[\,\times\R^2\bigr)}~\to~0\,.$$
Setting
$$\Omega^\sharp(t)\doteq\big\{x\in\R^2; (t,x)\in\Omega^\sharp\big\},$$
from (\ref{tep}) as $\ve\to 0$ one obtains 
\bel{meot}\caL^2(\Omega^\sharp(t))~=~t\quad \qquad\text{ for a.e. } t\in[0,T]\,.\eeq
Since $\Omega^\sharp(t)\subseteq V$ for every $t$,  (\ref{meot})  implies
\bel{lim0T} 
\lim_{t\to T-} \Big\| {\bf 1}_{\Omega^\sharp(t)} - {\bf 1}_V\Big\|_{\L^1}~=~0,
\qquad\quad \lim_{t\to 0+} \Big\| {\bf 1}_{\Omega^\sharp(t)} \Big\|_{\L^1}~=~0\eeq

Next, letting $\ve\to 0$ in (\ref{omein}), we obtain 
\bel{mont}\Omega^\sharp(s)~\subseteq~\Omega^\sharp(t)\qquad\qquad \forall 0\leq s<t.\eeq
Together, (\ref{meot})-(\ref{mont}) imply that $t\mapsto\Omega^\sharp(t)$ is a slicing of $V$.
\v
{\bf 2.} It remains to show that this slicing is optimal.   Assume not, and let $t\mapsto \Omega^*(t)$ be an optimal 
slicing of $V$, achieving a strictly lower value of the cost:
$$\int_0^T \H^1\bigl( \partial \Omega^*(t)\cap V\bigr)\, dt~<~\int_0^T \H^1\bigl( \partial \Omega^\sharp(t)\cap V\bigr)\, dt~\doteq~J^\sharp~\leq~\inf_{\ve > 0}~\int_0^{T_\ve} \H^1\bigl( \partial \Omega_\ve(t)\cap V\bigr)\, dt.$$
We will derive a contradiction, constructing an eradication strategy $\Omega_\ve(\cdot)$
with cost $J_\ve<J^\sharp$, for some $\ve>0$.

As a preliminary step, 
we claim that there exists a slicing strategy
$t\mapsto \Omega^\flat(t)$ such that
\bel{ssf} J^\flat~\doteq~ \int_0^T \H^1\bigl( \partial \Omega^\flat(t)\cap V\bigr)\, dt~<~J^\sharp,
\qquad \qquad \sup_{t\in [0,T]} \H^1\bigl( \partial \Omega^\flat(t)\cap V\bigr)~<~+\infty.\eeq
Indeed, if $$\sup_{t\in [0,T]} \H^1\bigl( \partial \Omega^*(t)\cap V\bigr)~<~+\infty$$
there is nothing to prove. Otherwise, we define $\gamma(t) ~\doteq~ \partial \Omega^*(t)$ and  
observe that the map
$$t~\mapsto ~\H^1\bigl(\gamma(t)\bigr) $$
is lower semicontinuous. We have
\bel{lrsm}
\lim_{r\to +\infty} \int_{\{\H^1(\gamma(t))>r\}}\H^1\bigl(\gamma(t)\bigr) \, dt ~=~0.\eeq
By lower semicontinuity, the domain of the above integral is an open set, i.e., a countable union of open intervals:
$$\Big\{t\in [0,T]\,;~ \H^1\bigl(\gamma(t)\bigr)>r\Big\}~=~\bigcup_j I_j\,,\qquad\qquad I_j=\,]a_j, b_j[\,.$$
Moreover, $$\H^1\bigl(\gamma(a_j)\bigr)~\leq ~r,\qquad\quad \H^1\bigl(\gamma(b_j)\bigr)~\leq ~r.$$

By (\ref{lrsm}) it also follows that for every constant $C$ one has
$$\lim_{r\to +\infty} (2r+C) \cdot \meas\Big(\Big\{t\in [0,T]\,;~ \H^1\bigl(\gamma(t)\bigr)>r\Big\}\Big)~=~0.$$
In particular, choosing 
$$C~=~\hbox{diam}(V)~=~\sup_{x,y\in V}~|x-y|$$ the diameter of the set $V$, we can choose $r$ large enough so that 
\bel{sl4}\bigl(2r+\hbox{diam}(V)\bigr)\cdot \meas\Big(\Big\{t\in [0,T]\,;~ \H^1\bigl(\gamma(t)\bigr)>r\Big\}\Big)~<~J^\sharp -J^*.\eeq
\v
{\bf 3.} 
The (sub-optimal) slicing strategy $t\mapsto \Omega^\flat(t)$ is defined as follows.   Fix a unit vector $\bfe\in\R^2$ and set
$$\Omega^\flat(t)~\doteq~\Omega^*(t)\qquad \hbox{if} ~\H^1\bigl(\gamma(t)\bigr)~\leq~r.$$
On the other hand, if $t\in \,]a_j, b_j[\,$ for some $j$, we define $\Omega^\flat(t)$ as a suitable interpolation between the sets
$\Omega^*(a_j)$ and $\Omega^*(b_j)$.   More precisely 
$$\Omega^\flat(t)~\doteq~\Omega^*(b_j)\cup \Big( \Omega^*(a_j)\cap \bigl\{ x\in\R^2\,;~~\langle \bfe, x\rangle \leq s_j(t)\bigr\},$$
where the increasing function $s_j(t)$ is uniquely determined by the area constraint 
$$\caL^2\bigl(\Omega^\flat(t)\bigr)~=~t.$$
The length of the relative perimeter of this set is estimated by
$$\bega{rl}
\H^1\bigl( \partial \Omega^\flat(t)\cap V\bigr)&\leq~\H^1\bigl( \partial \Omega^\flat(a_j)\cap V\bigr)+\H^1\bigl( \partial \Omega^\flat(a_j)\cap V\bigr)+\H^1\bigl( x\in V\,;~\langle \bfe, x\rangle \leq s_j(t)\bigr)\\[3mm]
&\leq~r+r+\hbox{diam}(V).\enda$$
In view of (\ref{sl4}), this shows that the slicing strategy $\Omega^\flat$ satisfies both inequalities in (\ref{ssf}).
%

\v
{\bf 4.} 
Consider the strategy
$$\Omega^\flat_\ve(t)~\doteq~\Omega^\flat\bigl(s_\ve(t)\bigr),$$
where
$$s_\ve(0)=0,\qquad\qquad {d\over dt} s_\ve(t)~=~1+{\ve\cdot \H^1\bigl( \partial \Omega^\flat(t)\cap V\bigr)}\,.
$$
By slightly ``slowing down"  the original strategy $\Omega^\flat$, we thus obtain a new strategy 
which is $\ve$-admissible.   
In the following we call $t_\ve(s)$ the inverse function, so that 
$${d\over ds} t_\ve(s)~=~\left[1+{\ve\cdot \H^1\bigl( \partial \Omega^\flat(t_\ve(s))\cap V\bigr)}\right]^{-1}$$

The total time needed to clean the set $V$ with this strategy satisfies
$$\bega{rl}T^\flat_\ve&\ds=~\caL^2(V) + \ve\int_0^{T_\ve^\flat} \H^1\bigl( \partial \Omega_\ve^\flat(t)\cap V\bigr)\, dt\\[4mm]
&\ds=~
\caL^2(V) + \ve\int_0^{T_\ve^\flat} \H^1\bigl( \partial \Omega^\flat\bigl(s_\ve(t)\bigr)\cap V\bigr)\, dt\\[4mm]
&\ds=~
\caL^2(V) + \ve\int_0^{T} \H^1\bigl( \partial \Omega^\flat(s)\cap V\bigr)\,{d t_\ve(s)\over ds} \,ds\,.
\enda
$$
Letting $\ve\to 0$, since the perimeters $\partial \Omega^\flat(s)\cap V$ have uniformly bounded length, we have the convergence
$$\int_0^{T} \H^1\bigl( \partial \Omega^\flat(s)\cap V\bigr)\,{d t_\ve(s)\over ds} ds\quad \to\quad \int_0^{T} \H^1\bigl( \partial \Omega^\flat(s)\cap V\bigr)\,ds~=~J^\flat~<~J^\sharp.$$
This shows that for $\ve>0$ small the strategies $\Omega_\ve$ are not optimal, reaching a contradiction.
\endproof

\section{Necessary conditions in the interior}
\label{sec:4}
\setcounter{equation}{0}

Let $t\mapsto \Omega(t)$, $t\in [0,T]$ be an optimal slicing of the set $V$.
In order to derive a set of necessary conditions for optimality, 
some regularity conditions  will be needed. Namely,
we shall assume that at least a portion of the 
boundaries $\partial \Omega(t)$ admits a $\C^{1,1}$ parameterization
(continuously differentiable with Lipschitz derivatives).

As shown in Fig.~\ref{f:csm3}, consider a $\C^{1,1}$ map
 $\Psi:(\tau,\xi)\mapsto x(\tau,\xi)$ with the following properties.
 \begi
\item[(i)]  {\it  The variables $(\tau,\xi)$ range over the  union of two rectangles:    
\bel{Wdef} W~\doteq~[\tau', \tau'']\times (I_1\cup I_2),\eeq
where $0<\tau_1<\tau_2< T$ and $I_1, I_2$ are disjoint closed intervals.}
 \item[(ii)] {\it 
At each time $t\in [\tau',\tau'']$, the map 
$$\xi~\mapsto ~x(t,\xi)~\in~\partial \Omega(t)\cap V$$
 is one-to-one.  Its range is contained in  the relative  boundary $\partial \Omega(t)\cap V$.}
 \item[(iii)]  {\it For every $(t,\xi)\in W$, the partial derivative
$ x_\xi(t,\xi)$
is a nonzero tangent vector to the boundary $\partial\Omega(t)$ at the point $x(t,\xi)$.}
\endi
By continuity and compactness, this implies
$$ \min_{(t,\xi)\in W} ~\bigl| x_\xi(t,\xi)\bigr|~>~0.$$
Therefore, by suitably choosing the orientation,  the perpendicular vector 
$$
\bfn(t,\xi)~\doteq~\left({x_\xi(t,\xi)\over \bigl| x_\xi(t,\xi)\bigr|}\right)^\perp$$
yields
the unit outer normal  to the set $ \Omega(t)$ at the boundary point $x(t,\xi)$.
\begi
\item[(iv)] {\it For each $\xi$,  the trajectory
$t\mapsto x(t,\xi)$ is orthogonal to the boundary $\partial\Omega(t)$ at every time $t\in [0,T]$.
Namely, there exists a  continuous scalar function $\beta:W\mapsto \R_+$ such that 
$$x_t(t,\xi)~=~\beta(t,\xi)\,\bfn(t,\xi)\qquad\qquad\forall (t,\xi)\in W.$$
}
\endi
We denote by 
\bel{curv}
\omega(t,\xi)~\doteq~{\la \bfn(t,\xi),\, x_{\xi\xi}(t,\xi)\ra\over \bigl|x_\xi(t,\xi)\bigr|^2}\eeq
the curvature of the boundary $\partial \Omega(t)$ at the point $x(t,\xi)$.
Notice that this curvature is well defined for a.e.~$(t,\xi)\in W$. Indeed, the functions
$x_\xi$ and $\bfn$ are locally Lipschitz continuous,  while $|x_\xi|$ is a continuous, strictly positive function.
By Rademacher's theorem~\cite{EG}, $x_{\xi\xi}$ exists almost everywhere.
The following regularity property will be assumed:
\begi
\item[(v)] {\it For each $\xi\in I_1\cup I_2$, the curvature function $t\mapsto \omega(t,\xi)$ 
is measurable and bounded. Moreover,
\bel{lepo} \lim_{\ve\to 0} ~\int_{\tau'}^{\tau''} \sup_{|\zeta-\xi|<\ve} \bigl|\omega(t,\zeta)-\omega(t,\xi)\bigr|\,dt~=~0.\eeq
}
 \endi

\begin{figure}[ht]
\centerline{\hbox{\includegraphics[width=15cm]{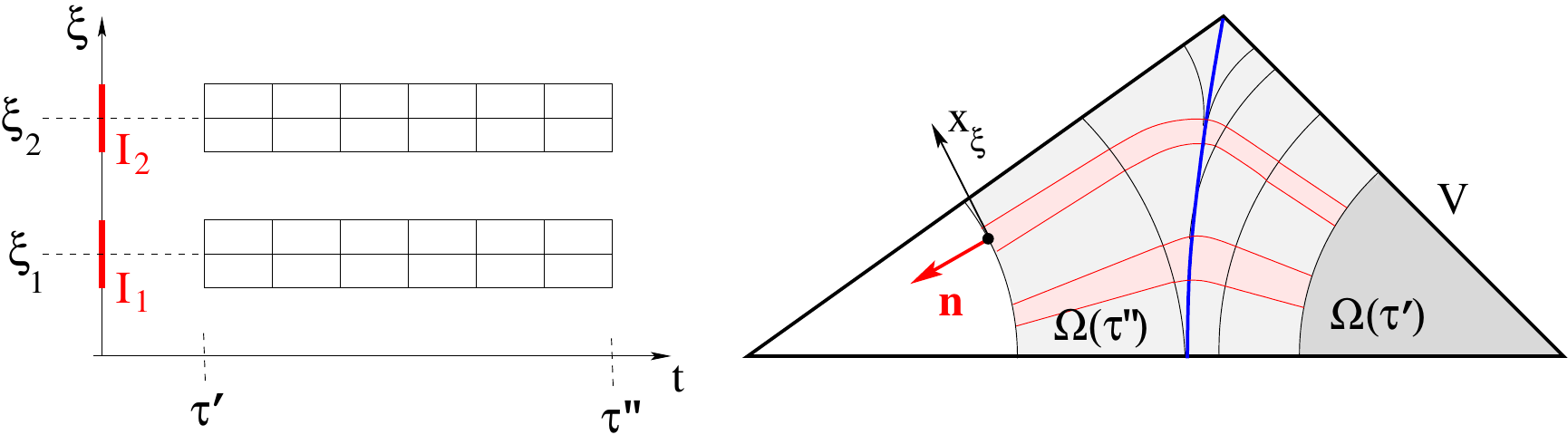}}}
\caption{\small Two tubes, parameterized by $(t,\xi)\mapsto x(t,\xi)$, where the point $(t,\xi)$
ranges over the two rectangles in (\ref{Wdef}).   The optimality of the slicing strategy is tested by 
enlarging the sets $\Omega(t)$ inside the first tube, and shrinking them by the same amount
inside the second tube.}
\label{f:csm3}
\end{figure}

Before stating a precise set of necessary conditions, we 
outline the main ideas.  
Let $t\mapsto \Omega(t)$ be an optimal slicing strategy.
Assume that there exist $\xi_1,\xi_2$, in the interior of the intervals $I_1,I_2$ respectively,  such that
$$\beta(\tau',\xi_1)~\doteq~\bigl| x_t(\tau',\xi_1)\bigr| ~>0,
\qquad\qquad   \beta(\tau'',\xi_2)~\doteq~\bigl| x_t(\tau'',\xi_2)\bigr| ~>0
.$$
This means: at the earlier time $\tau'$ the set $\Omega(\tau')$ is expanding in a neighborhood of 
the point $P_1= x(\tau', \xi_1) $, while
at the later time $\tau''$ the set $\Omega(\tau'')$ is expanding in a neighborhood of $P_2=x(\tau'', \xi_2)$.

For $\ve>0$ small, we construct a perturbed slicing strategy $t\mapsto  \Omega_\ve(t)$ such that
\begi 
\item   For all $t\notin [\tau',\tau'']$ one has $  \Omega_\ve(t)=  \Omega(t)$.
\item  For $t$ inside the time interval $ [\tau',\tau'']$, the difference $\Omega(t)\setminus\Omega_\ve(t)$
contains a small region with  area $A_\ve(t)$ in a neighborhood of $(t, \xi_1)$, while
 the difference $ \Omega_\ve(t)\setminus \Omega(t)$
contains a small region with the same area $A_\ve(t)$ in a neighborhood of $(t, \xi_2)$.
\endi

\begin{figure}[ht]
\centerline{\hbox{\includegraphics[width=5cm]{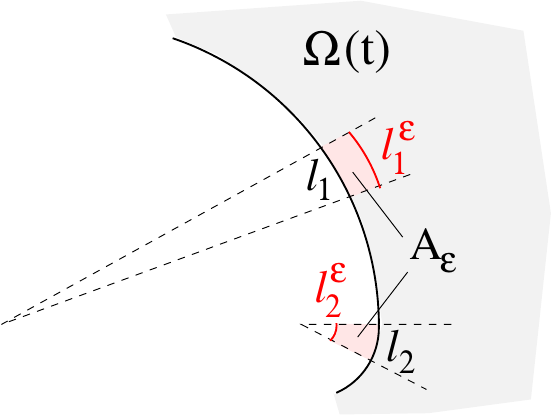}}}
\caption{\small  If the curvature of the boundary $\partial \Omega(t)$  near the point
$P_1=x(t,\xi_1)$ is smaller than near the point $P_2=x(t, \xi_2)$, by removing an area $A_\ve$
near $P_1$ and adding the same area near $P_2$,  according to (\ref{bdec}) the  perimeter decreases. 
}
\label{f:csm21}
\end{figure}

We now estimate the change in the total cost produced by the perturbation.
As shown in Fig.~\ref{f:csm21}, if an area $A_\ve$ is removed near a portion of boundary of length 
$l_1$ with curvature $\omega_1$, and added near  a portion of boundary of length 
$l_2$ with curvature $\omega_2$, to leading order the change in boundary length at time $t$ is
\bel{bdec}(l_1^\ve-l_1) + (l_2^\ve-l_2)~\approx~\omega_1(t,\xi_1) \cdot A_\ve -\omega_2(t,\xi_2) \cdot A_\ve\,.
\eeq
Hence, if
\bel{nec1}\int_{\tau'}^{\tau''} \omega_1(t,\xi_1) \, dt ~<~\int_{\tau'}^{\tau''} \omega_2(t,\xi_2) \, dt ,\eeq
we expect that the slicing strategy will not be optimal.

We can now state the main result of this section.

\begin{theorem}\label{t:41}
Let $t\mapsto\Omega(t)$, $t\in [0,T]$,  be an optimal slicing of the set $V\subset\R^2$. Let 
$\Psi: (t,\xi)\mapsto x(t,\xi)\in\partial\Omega(t)$ be a $\C^{1,1}$ map
with the properties (i)--(v) listed above.  Moreover, assume that for $t\in [\tau', \tau'']$ the boundary length $t\mapsto \H^1\bigl(\partial \Omega(t)\cap V\bigr)$ has bounded variation.

Consider any two points $\xi_1,\xi_2$ in the interior of $I_1,I_2$, respectively, and assume 
$\beta(\tau', \xi_1)>0$ and $\beta(\tau'',\xi_2)>0$. Then
\bel{intom}\int_{\tau'}^{\tau''} \omega(t,\xi_1) \, dt ~\geq~\int_{\tau'}^{\tau''} \omega(t,\xi_2) \, dt .\eeq
\end{theorem}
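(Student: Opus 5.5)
We argue by contradiction. Suppose that (\ref{intom}) fails, so that
$$\delta~\doteq~\int_{\tau'}^{\tau''} \bigl[\omega(t,\xi_2)-\omega(t,\xi_1)\bigr]\, dt~>~0.$$
We will build a competing slicing $\Omega_\ve(\cdot)$, following the heuristics above, with $J(\Omega_\ve)<J(\Omega)$ for $\ve>0$ small. This contradicts optimality.

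\textbf{The perturbation.} Fix a small $\rho>0$ and a nonnegative bump function $\varphi\in\C^\infty_c(\R)$ with $\int\varphi=1$. Set $\varphi_i(\xi)=\rho^{-1}\varphi((\xi-\xi_i)/\rho)$, so that its support lies in the interior of $I_i$. For $t\in[\tau',\tau'']$ we deform the boundary curve along the normal direction:
\begin{itemize}
\item near $\xi_1$ we move it inward by $\ve h_1(t)\varphi_1(\xi)/|x_\xi|$;
\item near $\xi_2$ we move it outward by $\ve h_2(t)\varphi_2(\xi)/|x_\xi|$.
\end{itemize}
The normalization by $|x_\xi|$ makes the removed and added areas equal to $\ve h_1(t)$ and $\ve h_2(t)$ to first order. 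We pick scalar functions $h_1,h_2\ge0$ vanishing at $\tau',\tau''$ with $h_1\approx h_2\approx 1$ in between, and adjust them at order $\ve^2$ so that $\caL^2(\Omega_\ve(t))=t$ holds exactly. Outside $[\tau',\tau'']$ we keep $\Omega_\ve(t)=\Omega(t)$.

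\textbf{Monotonicity.} Monotonicity (\ref{1}) is the delicate constraint. Inside the tubes, the sets move with normal speed $\beta(t,\xi)$. The perturbed sets are monotone provided
$$\beta|x_\xi|\pm \ve\,\bigl(h_i\varphi_i\bigr)_t~\ge~0.$$
At the initial time $\tau'$ we remove area near $\xi_1$, where by assumption $\beta(\tau',\xi_1)>0$. By continuity $\beta>0$ on a neighborhood $[\tau',\tau'+\eta]\times[\xi_1-\rho,\xi_1+\rho]$, so $h_1$ may ramp up there at an $O(1)$ rate for small $\ve$. Symmetrically, $h_2$ ramps down near $\tau''$, where $\beta(\cdot,\xi_2)>0$.

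The ramps of $h_1$ at $\tau''$ and of $h_2$ at $\tau'$ are the opposite cases. There the inward perturbation near $\xi_1$ is released and the outward one near $\xi_2$ is switched on. In these directions monotonicity is automatic: releasing the removed region, or adding the extra region, can only enlarge the set, and this may be done instantaneously. The area balance at such instants is handled by letting the two ramps occupy short intervals of length $\eta$. Their cost contribution is $O(\ve\eta)$, which is negligible after sending $\eta\to0$.

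\textbf{Cost expansion.} For a $\C^{1,1}$ curve, a normal displacement $\ve g(\xi)$ changes the length by
$$-\ve\int \omega\, g\,|x_\xi|\, d\xi+O(\ve^2).$$
The error is uniform in $t$, since $\omega$ is bounded and $\varphi$ is smooth. Integrating in time, the first-order change of $J$ equals
$$\ve\int_{\tau'}^{\tau''}\!\!\int\Big[\omega(t,\xi)\varphi_1(\xi)h_1(t)-\omega(t,\xi)\varphi_2(\xi)h_2(t)\Big]d\xi\,dt.$$
By (\ref{lepo}), replacing $\omega(t,\xi)$ by $\omega(t,\xi_i)$ on the support of $\varphi_i$ costs $o(1)$ as $\rho\to0$. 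Thus the change is
$$-\ve\,\delta+\ve\, o(1)+O(\ve\eta)+O(\ve^2).$$
Choosing $\rho,\eta$ small and then $\ve$ small makes it negative.

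The hypothesis that $t\mapsto\H^1(\partial\Omega(t)\cap V)$ has bounded variation is used to control the length outside the tubes. It guarantees that a small relative reparametrization of time, needed to correct the area mismatch at order $\ve^2$, changes $J$ only by $O(\ve^2)$. Indeed,
$$\int|\ell(s_\ve(t))-\ell(t)|\,dt~\le~\TV(\ell)\cdot\sup|s_\ve-t|.$$

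\textbf{Main obstacle.} The hardest step is enforcing (\ref{1}) exactly while keeping the exact area constraint (\ref{2}). The first-order construction satisfies both only approximately. My first attempt is to fix the areas by a time reparametrization $t\mapsto s_\ve(t)$ with $|s_\ve-t|=O(\ve^2)$, using the BV assumption as above. If that is not enough, I will instead use a small additional normal shift on the support of $\varphi_2$ to absorb the area error, which again costs only $O(\ve^2)$.
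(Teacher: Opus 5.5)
Your construction matches the paper's up to scaling. The paper uses width $\ve$ and amplitude $\ve^\gamma$ where you use fixed width $\rho$ and amplitude $\ve$. Otherwise the ingredients are the same: an inward dent near $\xi_1$ and an outward bump near $\xi_2$, with amplitudes normalized by $|x_\xi|$, switched on and off where $\beta(\tau',\xi_1)>0$ and $\beta(\tau'',\xi_2)>0$ allow. The first-order gain is $\ve\int(\omega(t,\xi_1)-\omega(t,\xi_2))\,dt$, and the area is restored by a time change whose cost the BV hypothesis controls.

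The gap is monotonicity (\ref{1}) on the middle interval $[\tau'+\eta,\tau''-\eta]$, which you never check. The hypotheses give $\beta>0$ only near $(\tau',\xi_1)$ and $(\tau'',\xi_2)$; in between, parts of either tube may be stationary. Where $\beta=0$, your linearized condition forces $h_1$ nonincreasing and $h_2$ nondecreasing. Your $O(\ve^2)$ area corrections, or the extra normal shift on $\mathrm{supp}\,\varphi_2$, need not respect this.

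Freezing $h_1\equiv h_2\equiv 1$ and relying only on the time change does not help either. An increasing reparametrization preserves nestedness but cannot create it, and the unreparametrized family is nested only to first order. Writing the displaced curve as $x^\ve=x+\ve G\bfn$, one has exactly
\[
\langle x^\ve_t,(x^\ve_\xi)^\perp\rangle=(\beta+\ve G_t)\,|x_\xi|\,(1-\ve\,\omega\,G)+\ve^2\,G\,G_\xi\,\beta_\xi/|x_\xi| .
\]
The last term comes from the rotation of the normal, $\bfn_t=-\beta_\xi\,x_\xi/|x_\xi|^2$. It is not proportional to $\beta$, and close to a zero of $\beta(t,\cdot)$, where $\beta$ is small but $\beta_\xi$ need not be, it can make the normal velocity negative. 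The first term is a positive multiple of $\beta$ for constant $h_i$ only because $\partial_t|x_\xi|=-\omega\beta|x_\xi|$, which follows from $\langle x_\xi,x_t\rangle\equiv0$. Your displayed condition silently drops this $\O(\ve\beta)$ term.

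The missing idea is Step 6 of the paper. Before fixing the area, add a small monotone drift to both amplitudes, $\sigma^+_{i,\ve}=\sigma_{i,\ve}+\ve^\alpha(t-\tau'-\ve)$, so that the dent slowly shrinks and the bump slowly grows. This adds a strictly positive term $\ve^\alpha\vp_\ve$ to the normal velocity, which dominates the higher-order errors. Its cost, and the area surplus it creates, are negligible against the first-order gain. Only after that is the area restored by the increasing reparametrization, with Lemma~\ref{l:41} controlling the cost.

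In your scaling it suffices to take $h_1=1-\ve^{\alpha-1}(t-\tau'-\eta)$ and $h_2=1+\ve^{\alpha-1}(t-\tau'-\eta)$ on the middle interval, with $1<\alpha<2$. The margin $\ve^\alpha\varphi_i/|x_\xi|$ beats the $\O(\ve^2\varphi_i)$ error, and the drift and resulting time shift cost $\O(\ve^\alpha)=o(\ve)$.
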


{\bf Proof.} {\bf 1.}   Assume that opposite inequality (\ref{nec1}) holds.   To derive a contradiction,
define the functions (see Fig.~\ref{f:sm76})
\bel{presc}\vp(s)~\doteq~\left\{ \bega{cl}{1\over 2} - s^2 &\hbox{for}~~|s|\leq {1\over 2},\\[1mm]
\bigl(1-|s|\bigr)^2&\hbox{for}~~ {1\over 2}\leq |s|\leq 1,\\[1mm]
0&\hbox{for}~~ |s|\geq 1,\enda\right.\qquad\qquad \vp_\ve(s)\doteq \vp\left( s\over\ve\right).\eeq
Notice that these are $\C^{1,1}$ functions, continuously differentiable with Lipschitz derivative and with compact support.
Moreover, they
satisfy
\bel{vpr1}
\int \vp(s)\, ds~=~{1\over 2}\,,\qquad\qquad \int \vp_\ve(s)\, ds~=~{\ve\over 2}\,,\eeq
$$\bigl[\vp'(s)\bigr]^2~\leq~4 \vp(s),\qquad \bigl[\vp_\ve'(s)\bigr]^2~\leq~4\ve^{-2} \vp_\ve(s)
\qquad\qquad\forall s\in \R.$$

\begin{figure}[ht]
\centerline{\hbox{\includegraphics[width=9cm]{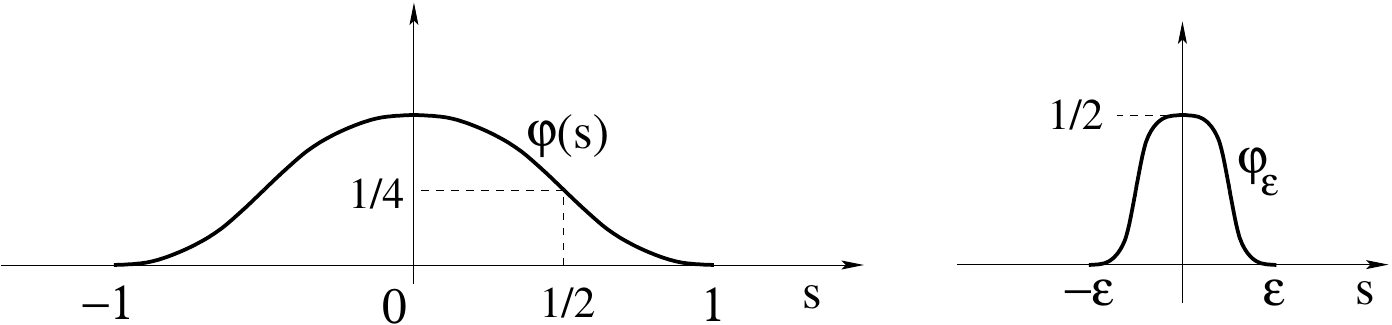}}}
\caption{\small The functions $\vp$ and $\vp_\ve$ introduced at (\ref{presc}). }
\label{f:sm76}
\end{figure}

{\bf 2.} To construct a family of perturbed strategies $t\mapsto\Omega_\ve(t)$ we proceed as follows.
Define the scalar functions $\sigma_{1,\ve}, \sigma_{2,\ve}$ by setting
 \bel{siep1}
\sigma_{1,\ve}(t)~\doteq~\left\{\bega{cl}\ds {-\ve^\gamma\over\bigl|x_\xi(t,\xi_1)\bigr|}\qquad
&\hbox{if} ~~t\in [\tau'+\ve,\tau''-\ve],
\\[4mm]
0\qquad &\hbox{if} ~~t\notin [\tau', \, \tau''],\enda\right.\eeq
\bel{siep2}
\sigma_{2,\ve}(t)~\doteq~\left\{\bega{cl}\ds {\ve^\gamma\over\bigl|x_\xi(t,\xi_2)\bigr|}\qquad
&\hbox{if} ~~t\in [\tau'+\ve,\tau''-\ve],
\\[4mm]
0\qquad &\hbox{if} ~~t\notin [\tau', \, \tau''].\enda\right.\eeq
On the remaining intervals $[\tau', \tau'+\ve]$ and $[\tau''-\ve,\tau'']$ the above functions  
are extended by a linear interpolation. Namely, for $i=1,2$ we define
\bel{suec}\left\{\bega{rl} \ds \sigma_{i,\ve}(t)~=~{t-\tau'
\over\ve}\cdot\sigma_{i,\ve}(\tau'+\ve)\qquad &\hbox{if}\quad  t\in [\tau',
 \,\tau'+\ve], \\[4mm]
 \ds
\sigma_{i,\ve}(t)~=~ {\tau''-t
\over\ve}\cdot \sigma_{i,\ve}(\tau''-\ve) \qquad &\hbox{if}\quad  t\in [\tau''-\ve, \tau''].
\enda\right.\eeq
 
 Next, recalling that $\bfn(t,\xi)$ is the outward unit normal vector to the boundary $\partial\Omega(t)$
at the point $x(t,\xi)$, for any $\ve>0$ sufficiently small we consider the sets $\Omega_\ve(t)$ whose boundaries are parameterized by
\bel{pert}
x^\ve(t,\xi)~=~\left\{ \bega{cl} x(t,\xi) \qquad\qquad \quad\qquad &\hbox{if}\quad 
|\xi-\xi_1|>\ve~~\hbox{or}~~|\xi-\xi_2|>\ve,\\[2mm]
x(t,\xi)+\sigma_{1,\ve}(t) \vp_\ve(\xi-\xi_1) \bfn(t,\xi)\qquad &\hbox{if}\quad 
|\xi-\xi_1|\leq\ve ,\\[2mm]
x(t,\xi)+\sigma_{2,\ve}(t) \vp_\ve(\xi-\xi_2) \bfn(t,\xi)\qquad &\hbox{if}\quad 
|\xi-\xi_2|\leq\ve .\enda\right.
\eeq
Roughly speaking, since $\sigma_{1,\ve}<0<\sigma_{2,\ve}$,  during the time interval $[\tau',\tau'']$ we are pushing the boundary $\partial \Omega(t)$  inward at points near 
$x(t,\xi_1)$ where the curvature is small, and outward at points $x(t,\xi_2)$ where the curvature is large.
Note that the portion of the boundary $\partial \Omega(t)$, which is not in the range of the parameterization map $\xi\mapsto \Psi(t,\xi)$, is left unchanged.

We claim that, if (\ref{nec1}) holds, this will decrease the average length of the boundaries, providing a contradiction.
\v
{\bf 3.} We now work out the key estimate.
Observing that $\langle x_\xi, \bfn\rangle=\langle \bfn_\xi, \bfn\rangle=0$, recalling the formula
(\ref{curv}) for the curvature, one obtains
\bel{cu2} \la \bfn_\xi(t,\xi),\, x_{\xi}(t,\xi)\ra ~=~-
\la \bfn(t,\xi),\, x_{\xi\xi}(t,\xi)\ra ~=~-\omega(t,\xi) \bigl|x_\xi(t,\xi)\bigr|^2.\eeq
For $|\xi-\xi_1|<\ve$, differentiating (\ref{pert}) w.r.t.~$\xi$ and then using (\ref{cu2}), we compute
\bel{xxi1}
x_\xi^\ve~=~x_\xi + \sigma_{1,\ve}\vp'_\ve \bfn +  \sigma_{1,\ve} \vp_\ve\bfn_\xi\,,\eeq
\bel{xxi2}\bega{rl}
|x_\xi^\ve|^2
&=~
|x_\xi|^2 + 2 \la x_\xi ,\,  \sigma_{1,\ve}\vp_\ve \bfn_\xi \ra+ \sigma_{1,\ve}^2 (\vp'_\ve)^2
+  \sigma_{1,\ve}^2\vp_\ve^2| \bfn_\xi|^2 \\[3mm]
&=~
\bigl(1-2\omega \sigma_{1,\ve} \vp_\ve\bigr) \,|x_\xi|^2 + \sigma_{1,\ve}^2 (\vp'_\ve)^2
+  \sigma_{1,\ve}^2\vp_\ve^2| \bfn_\xi|^2.
\enda\eeq
Taking square roots, since we are assuming that $|x_\xi|$ remains uniformly positive, we obtain
\bel{xxe}
|x^\ve_\xi| ~=~\bigl(1-\omega   \sigma_{1,\ve} \vp_\ve\bigr) \,|x_\xi| 
+ \O(1)\cdot   \sigma_{1,\ve}^2\bigl[(\vp'_\ve)^2 + \vp_\ve^2\bigr].\eeq
Here and throughout the sequel, the Landau symbol $\O(1)$ denotes a uniformly bounded quantity.
Similarly, for  $|\xi-\xi_2|<\ve$ one obtains
$$
|x^\ve_\xi| ~=~\bigl(1-\omega   \sigma_{2,\ve} \vp_\ve\bigr) \,|x_\xi| 
+ \O(1)\cdot   \sigma_{2,\ve}^2\bigl[(\vp'_\ve)^2 + \vp_\ve^2\bigr].
$$
Observing that 
$$\sigma_{i,\ve} =\O(1)\cdot \ve^\gamma, \qquad\vp_\ve= \O(1), \qquad\vp_\ve' = \O(1)\cdot \ve^{-1},$$
and recalling (\ref{vpr1}), we find
\bel{differ} \bega{l}\ds
\int_{\tau'}^{\tau''} \Big[ \H^1\bigl(\partial \Omega_\ve(t)\bigr) - \H^1\bigl(\partial \Omega(t)\bigr)\Big] \, dt
~=~\int_{\tau'}^{\tau''} \int_{I_1\cup I_2} \Big( \bigl| x_\xi^\ve(t,\xi)\bigr| -  \bigl| x_\xi(t,\xi)\bigr|
\Big)d\xi \, dt
\\[4mm]
\ds\qquad =~\int_{\tau'}^{\tau''}\sum_{i=1,2} \int_{I_i} \Big(-\omega   \sigma_{i,\ve} \vp_\ve\, \,|x_\xi| 
+ \O(1)\cdot   \sigma_{i,\ve}^2\bigl[(\vp'_\ve)^2 + \vp_\ve^2\bigr]\Big)d\xi~ dt
\\[4mm]
\qquad \ds =~\int_{\tau'}^{\tau''}\left(\int \ve^\gamma
\bigl[ \omega(t, \xi_1) \vp_\ve(\xi-\xi_1) - \omega(t,\xi_2)  \vp_\ve(\xi-\xi_2)+ \eta(t,\ve)\bigr] d\xi\right) dt
+\O(1)\cdot \ve\,\ve^{2\gamma} \, \ve^{-2}
\\[4mm]
\qquad \ds =~{\ve \over 2}\int_{\tau'}^{\tau''}\ve^\gamma
\bigl[ \omega(t, \xi_1)  - \omega(t,\xi_2) \bigr]dt  + \ve^{\gamma+1} \ov \eta(\ve)
+\O(1)\cdot \ve^{2\gamma-1}.
\enda\eeq
Here $\eta(t,\ve)$ is a term which keeps track of the differences
$$|x_\xi(t,\xi)| - |x_\xi(t,\xi_i)|,  \qquad\qquad \bigl| \omega(t,\xi)-\omega(t, \xi_i)\bigr| ,\qquad ~~i=1,2.$$
The assumption (\ref{lepo}) now implies $\ov\eta(\ve)\to 0$ as $\ve\to 0$.
By the Lipschitz continuity of $x_\xi$, we conclude that the right hand side of 
(\ref{differ}) is $<0$ for all $\ve>0$ sufficiently small,   provided we choose $\gamma$ so that $2\gamma-1 > \gamma+1$.
\v
{\bf 3.}
Based on the key estimate (\ref{differ}), to achieve a contradiction we  need to further
 modify the perturbed strategy (\ref{pert}), making sure it yields a slicing of the set $V$.  
 Indeed, the sets $\Omega_\ve(t)$  should still satisfy the identity
\bel{adm1}\caL^2\bigl(\Omega_\ve(t)\bigr)\,=\,t
\qquad\qquad\forall t\in [0,T],\eeq
together with the implication
\bel{adm2} \tau'\leq s<t\leq \tau''\qquad\implies\qquad \Omega_\ve(s)\subset \Omega_\ve(t).\eeq
In the next steps,  by a further modification of the sets $\Omega_\ve(t)$, 
we shall construct a new slicing $t\mapsto\Tilde \Omega_\ve(t)$ of $V$, with strictly lower cost.
\v
{\bf 4.} By (\ref{vpr1}) and (\ref{siep1})-(\ref{siep2}), for $t\in [\tau'+\ve, \tau''-\ve]$ one has
$$ -\int_{I_1} \bigl| x_\xi(t, \xi_2)\bigr| \sigma_{1,\ve}(t) \vp_\ve(\xi-\xi_1)\, d\xi~=~{\ve^{\gamma+1}\over 2}
~=~\int_{I_2} \bigl| x_\xi(t, \xi_2)\bigr| \sigma_{2,\ve}(t) \vp_\ve(\xi-\xi_2)\, d\xi.$$
One should keep in mind that $\Omega(t)$ is larger than $\Omega_\ve(t)$ in a neighborhood of 
$x(t, \xi_1)$, while
$\Omega_\ve(t)$ is larger than $\Omega(t)$ in a neighborhood of 
$x(t, \xi_2)$.
By the assumption that the tangent vector $x_\xi(t,\xi)$ is Lipschitz continuous, 
it follows
$$\caL^2\bigl( \Omega(t)\setminus \Omega_{\ve}(t)\bigr)~=~{\ve^{\gamma+1}\over 2}
+\O(1)\cdot \ve^{\gamma+2},$$
$$\caL^2\bigl( \Omega_{\ve}(t)\setminus \Omega(t)\bigr)~=~{\ve^{\gamma+1}\over 2}
+\O(1)\cdot \ve^{\gamma+2}.$$
We observe that the same estimates hold for all $t\in [\tau', \tau'']$, since the extensions
(\ref{suec}) are defined by convex interpolation.

The above analysis shows that the sets $\Omega_\ve(t)$ can violate the area identity (\ref{adm1}),
but only in the amount $\O(1)\cdot \ve^{\gamma+2}$.
 \v
 {\bf 5.} To understand by how much the monotonicity assumption (\ref{adm2}) is violated,
 consider the new outer normal:
$$\bfn^\ve(t,\xi)~\doteq ~{\bigl[x^\ve_\xi(t,\xi)\bigr]^\perp\over\bigl| x^\ve_\xi(t,\xi)\bigr|}\,.$$
Observe that the condition
\bel{inw}
 \langle x^\ve_t, \bfn^\ve\rangle~\geq~0\qquad\qquad\qquad\forall t,\xi\eeq
would  guarantee that all boundary points $x^\ve(t,\xi)\in\partial\Omega_\ve(t)$ move in the outward direction, thus implying (\ref{adm2}). We now check by how much (\ref{inw}) is violated.

For $|\xi-\xi_1|<\ve$, differentiating the second equation in (\ref{pert}) w.r.t.~time we  obtain
$$
x^\ve_t~=~x_t + \dot \sigma_{1,\ve} \vp_\ve \bfn + \sigma_{1,\ve}\vp_\ve \bfn_t\,.$$
Since $|\bfn^\ve|=|\bfn|=1$, by (\ref{xxi1}) we have
$$\bega{rl} |\bfn^\ve-\bfn|&=~\O(1)\cdot \la \bfn,\, (x^\ve_\xi-x_\xi)^\perp\ra
~=~\O(1)\cdot  \la x_\xi,\, (x^\ve_\xi-x_\xi)
\ra\\[2mm]
 &=~\O(1)\cdot \la x_\xi,\, \sigma_{1,\ve}\vp_\ve \bfn_\xi
\ra~=~\O(1)\cdot \sigma_{1,\ve}\vp_\ve\,.\enda
$$
Moreover, since $x_t$ is parallel to $\bfn$,
and $|x_\xi|$ remains uniformly positive, we have
\bel{nnep}\bega{rl}
 \langle x^\ve_t, \bfn^\ve-\bfn\rangle&=~\O(1)\cdot\bigl( |x_t| + | \dot\sigma_{1,\ve}|\vp_\ve \bigr)\cdot 
|\bfn^\ve-\bfn|^2  + \O(1)\cdot \sigma_{1,\ve} \vp_\ve \, |\bfn_t|\, |\bfn^\ve-\bfn|\\[3mm]
&=~\O(1)\cdot \sigma_{1,\ve}^2 \vp_\ve^2\,.
\enda
\eeq

Recalling (\ref{siep1}) and using the identity $\langle x_\xi,x_t\rangle \equiv 0$ together with (\ref{curv}), we now compute
\bel{dotsi1}\bega{rl} \dot\sigma_{1,\ve}(t)&\ds =~{-\ve^\gamma\over \bigl|x_\xi(t,\xi_1)\bigr|^2}\cdot {d\over dt} \bigl|x_\xi(t,\xi_1)\bigr|
~=~{-\ve^\gamma\langle x_{\xi}, x_{\xi t}\rangle\over \bigl|x_\xi(t,\xi_1)\bigr|^3}
~=~{\ve^\gamma\langle x_{\xi \xi}, x_t\rangle\over \bigl|x_\xi(t,\xi_1)\bigr|^3}\\[5mm]
&\ds =~{\ve^\gamma\langle x_{\xi \xi}, \bfn\rangle\over \bigl|x_\xi(t,\xi_1)\bigr|^2}\, {|x_t|\over |x_\xi|}
~=~\ve^\gamma{ \omega\over |x_\xi|}\,|x_t|~=~\O(1)\cdot \ve^\gamma \,|x_t|\,.
\enda
\eeq
An entirely similar computation yields
\bel{dotsi2} \dot\sigma_{2,\ve}(t)~=~\O(1)\cdot \ve^\gamma \,|x_t|\,.\eeq

The quantity in (\ref{inw}) can now be computed by 
\bel{xtn} \langle x^\ve_t, \bfn^\ve\rangle~=~ \langle x^\ve_t, \bfn\rangle 
+  \langle x^\ve_t, \bfn^\ve-\bfn\rangle~=~ \langle x_t, \bfn\rangle 
+ \dot\sigma_{i,\ve} \vp_\ve + \langle x^\ve_t, \bfn^\ve-\bfn\rangle.
\eeq
Here $i=1$ if $|\xi-\xi_1|<\ve$ and $i=2$ if $|\xi-\xi_2|<\ve$.   Note that if both of these alternatives fail,
then by construction $x^\ve(t,\xi)= x(t,\xi)$ and 
$$\langle x^\ve_t, \bfn^\ve\rangle~=~\langle x_t, \bfn\rangle~=~|x_t|~\geq ~0.$$

\begin{figure}[ht]
\centerline{\hbox{\includegraphics[width=9cm]{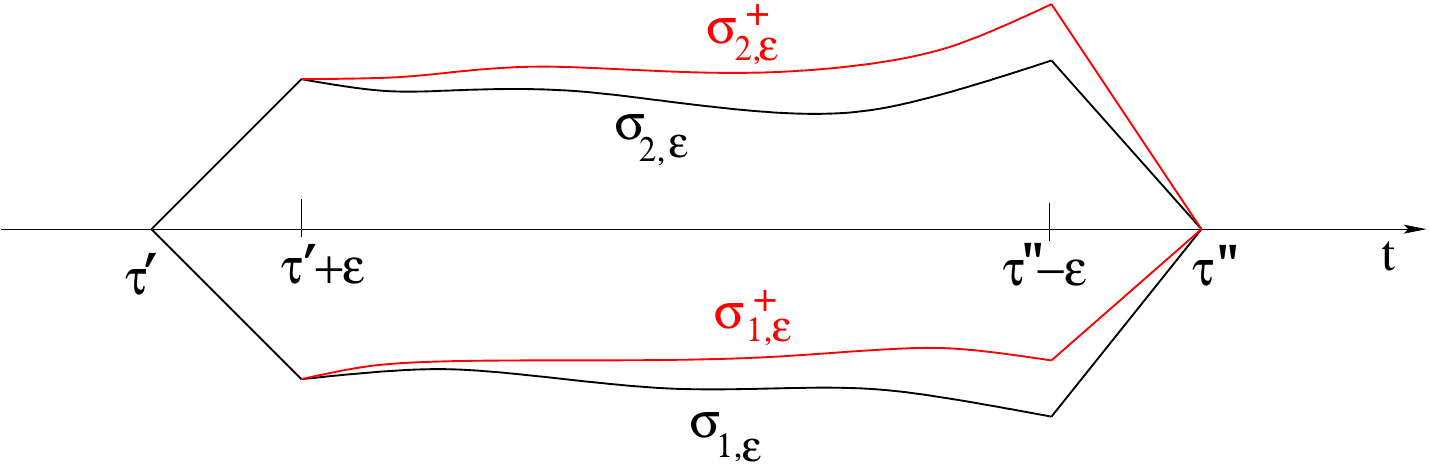}}}
\caption{\small The functions $\sigma_{i,\ve}$ and $\sigma_{i,\ve}^+$ introduced at 
(\ref{siep1})--(\ref{suec})
and at (\ref{si+})-(\ref{si++}). }
\label{f:csm22}
\end{figure}
\v
{\bf 6.} 
To achieve the monotonicity relations (\ref{adm2}) we define the modified functions (see Fig.~\ref{f:csm22})
\bel{si+}
\sigma_{i,\ve}^+(t)~\doteq~\left\{ \bega{cl} \sigma_{i,\ve}(t)\quad &\hbox{if} ~~t\notin [\tau'+\ve, \tau''],\\[1mm]
 \sigma_{i,\ve}(t)+ \ve^\alpha(t-\tau'-\ve) \quad &\hbox{if} ~~t\in [\tau'+\ve, \tau''-\ve],\enda\right.\qquad \quad i=1,2,\eeq
 for a suitable exponent $\alpha>0$, whose precise value will be chosen later.
We then extend both functions linearly on the remaining interval $[\tau''-\ve, \tau'']$ by setting
 \bel{si++}\sigma^+_{i,\ve}(t) ~=~ {\tau''-t\over\ve}\cdot \sigma_{i,\ve}^+(\tau''-\ve)\qquad
 \qquad t\in [\tau''-\ve, \tau''].\eeq

We claim that, replacing $\sigma_{i,\ve}$ with $\sigma^+_{i,\ve}$,
the inner product in (\ref{xtn}) is always $\ge 0$.
Notice that we only need to study the cases where $|\xi-\xi_1|<\ve$ or $|\xi-\xi_2|<\ve$,
because outside these intervals the boundaries of the sets $\Omega(t)$ are not changed. 

Three time intervals will be separately  considered:

CASE 1: $t\in [\tau', \tau'+\ve]$.   

Consider first  the case  where  $|\xi-\xi_1|\leq \ve$.
By assumption  $\bigl| x_t(\tau',\xi_1)\bigl|= \beta(\tau', \xi_1)>0$.   
By continuity, 
there exists $\delta_0>0$ such that
$$\bigl| x_t(t,\xi)\bigl|~>~\delta_0\,>\,0\qquad  \hbox{for} ~~t\in [\tau', \tau'+\ve], ~~|\xi-\xi_1|\leq\ve,$$
and all $\ve>0$ small enough.    Hence, again by continuity, the right hand side of (\ref{xtn}) remains
strictly positive for all $\ve>0$ small enough.

Next, consider the case where  $|\xi-\xi_2|\leq \ve$.  By (\ref{siep2})-(\ref{suec}) it follows 
(see Fig.~\ref{f:csm22})
$$\dot \sigma_{2,\ve}(t)~=~{\ve^{\gamma-1}\over \bigl| x_\xi(\tau'+\ve, \xi_2)\bigr|}~>~0\,,
\qquad \hbox{if} ~~t\in [\tau', \,\tau'+\ve],$$ 
\bel{xn5} \bega{rl} \langle x^\ve_t, \bfn^\ve\rangle
&=~ \langle x_t, \bfn\rangle + \dot\sigma_{2,\ve} \vp_\ve + \langle x^\ve_t, \bfn^\ve-\bfn\rangle
\\[2mm]
&\ds=~ |x_t|+{\ve^{\gamma-1}\over \bigl| x_\xi(\tau'+\ve, \xi_2)\bigr|}\vp_\ve + \O(1)\cdot \ve^{2\gamma} \vp_\ve^2.\\[2mm]
&\geq \ds~ 0
\enda\eeq
for all $\ve >0$ sufficiently small.

\v

CASE 2: $t\in [\tau''-\ve, \tau'']$.    

Assume first that  $|\xi-\xi_2|\leq \ve$. Since $\bigl| x_t(\tau'',\xi_2)\bigl|= \beta(\tau'', \xi_2)>0$,
by continuity
there exists $\delta_0>0$ such that
$$\bigl| x_t(t,\xi)\bigl|~>~\delta_0\,>\,0\qquad  \hbox{for} ~~t\in [\tau''-\ve, \, \tau''], ~~|\xi-\xi_2|\leq\ve,$$
and all $\ve>0$ small enough.    Hence, again by continuity, the right hand side of (\ref{xtn}) remains
strictly positive for all $\ve>0$ small enough.

Next, consider the case where  $|\xi-\xi_1|\leq \ve$.  By (\ref{siep2})-(\ref{suec}) it follows 
(see Fig.~\ref{f:csm22})
$$\dot \sigma^+_{1,\ve}(t)~=~-{1\over\ve} \sigma^+_{1,\ve}(\tau''-\ve)  ~=~{-1\over\ve}\left(
{-\ve^{\gamma}\over \bigl| x_\xi(\tau'+\ve, \xi_2)\bigr|} + \ve^\alpha (\tau''-\tau'-2\ve)\right).$$ 
Moreover, using (\ref{nnep}) with $\sigma_{1,\ve}$ replaced by $\sigma^+_{1,\ve}$, we obtain
\bel{xn6} \bega{rl} \langle x^\ve_t, \bfn^\ve\rangle
&=~ \langle x_t, \bfn\rangle + \dot\sigma_{1,\ve} \vp_\ve + \langle x^\ve_t, \bfn^\ve-\bfn\rangle
\\[2mm]
&\ds=~ |x_t|+{\ve^{\gamma-1}\over \bigl| x_\xi(\tau'+\ve, \xi_2)\bigr|}\vp_\ve + 
\O(1)\cdot \ve^{\alpha-1} \vp_\ve+ \O(1)\cdot (\ve^{\gamma} +\ve^\alpha)^2\vp_\ve^2\\[2mm]
&\geq ~0,
\enda\eeq
provided that $\alpha>\gamma$ and $\ve>0$ is suitably small.

\v
CASE 3:  $t\in [\tau'+\ve, \tau''-\ve]$.   By (\ref{si+}) and (\ref{dotsi1})-(\ref{dotsi2}), for $i=1,2$ it follows
\bel{plus} \dot \sigma^+_{i,\ve}(t)~=~\dot \sigma_{i,\ve}(t)+\ve^\alpha~=~\O(1)\cdot \ve^\gamma |x_t| + \ve^\alpha.\eeq
Replacing $\sigma_{i,\ve}$ with $\sigma_{i,\ve}^+$ in (\ref{xtn}) and using (\ref{nnep}), (\ref{plus}), we obtain
\bel{xtn+}\bega{rl} \langle x^\ve_t, \bfn^\ve\rangle&=~ \langle x_t, \bfn\rangle 
+ \dot\sigma_{i,\ve} \vp_\ve + \langle x^\ve_t, \bfn^\ve-\bfn\rangle\\[2mm]
&=~ |x_t|  +\O(1)\cdot \ve^\gamma |x_t| \vp_\ve +\ve^\alpha\vp_\ve
+ \O(1)\cdot (\sigma_{1,\ve}^+)^2 \vp_\ve^2\\[2mm]
&\geq~\ds {|x_t|\over 2} + \ve^\alpha \vp_\ve+\O(1)\cdot  (\ve^\alpha + \ve^\gamma)^2 \vp_\ve~\geq ~0
\enda\eeq
provided $\alpha< 2\gamma$ and $\ve>0$ is small enough.
\v
{\bf 7.} Call $t\mapsto \Hat\Omega_\ve(t)$ the set-valued map determined by the perturbation (\ref{pert}), 
replacing the functions $\sigma_{i,\ve}$ with $\sigma_{i,\ve}^+$, $i=1,2$.
By the previous step, if $\gamma>2$ and $\gamma<\alpha< 2\gamma$, the multifunction $\Hat\Omega_\ve$ satisfies the monotonicity assumption (\ref{adm2}).
However, a further modification is needed to satisfy (\ref{adm1}).  

By construction, we trivially have
$$\caL^2\bigl(\Hat\Omega_\ve(t)\bigr)~=~\caL^2\bigl(\Hat\Omega(t)\bigr)~=~t\qquad\qquad 
\forall t\notin [\tau', \tau''].$$
%
Moreover, the same argument used in step {\bf 4} now yields
$$\Big|\caL^2\bigl(\Hat \Omega_\ve(t)\bigr) - \caL^2\bigl(\Omega(t)\bigr)\Big|~=~
\O(1) \cdot \bigl( \ve^{\gamma+2}+ \ve^{\alpha+1}\bigr) .$$

As a consequence, there exists a unique Lipschitz continuous time rescaling
$t\mapsto t_\ve(t)$, implicitly defined by
$$\caL^2\bigl(\Hat\Omega_\ve(t_\ve(t)\bigr)~=~\caL^2\bigl(\Omega(t)\bigr)~=~t.$$
This map satisfies
\bel{tepp}t_\ve(t)-t~=~\left\{ \bega{cl}     0 \qquad &\hbox{if}\quad t\notin [\tau', \tau''],\\[2mm]
\O(1) \cdot \bigl( \ve^{\gamma+2}+ \ve^{\alpha+1}\bigr)  \qquad &\hbox{if}\quad t\in [\tau', \tau'']\,.
\enda\right.\eeq
By construction, the modified map
$$
t~\mapsto~\Tilde \Omega_\ve(t)~\doteq~\Hat\Omega_\ve\bigl(t_\ve(t)\bigr),$$
is a slicing of the set $V$.
\v
{\bf 8.} It remains to show that the cost of the new slicing $\Tilde\Omega_\ve(\cdot)$ 
is strictly smaller than
the cost of $\Omega(\cdot)$. 
For readers' convenience, we  collect all the assumptions  on the constants $\alpha,\gamma$:
\bel{ag}
\gamma\,>\,2,\qquad \gamma\,<\alpha\,<\,  2\gamma,\qquad \alpha\,>\,\gamma+1.\eeq
 
 We recall that, by (\ref{differ}), for all $\ve>0$ small enough, 
one has
\bel{dif2}
\int_{\tau'}^{\tau''} \Big[ \H^1\bigl(\partial \Omega_\ve(t)\bigr) - \H^1\bigl(\partial \Omega(t)\bigr)\Big] \, dt
~\leq~ {\ve^{\gamma+1}\over 3} \int_{\tau'}^{\tau''} \bigl[ \omega(t,\xi_1)-\omega(t,\xi_2)\bigr]\, dt~<~0\,.
\eeq
We need to check that a similar inequality still holds when $\Omega_\ve$ is replaced by $\Tilde\Omega_\ve$.

Repeating the estimate in (\ref{differ}) with $\sigma_{i,\ve}$ replaced by $\sigma^+_{i,\ve}$, $i=1,2$,
we observe that for every $t\in [\tau', \tau'']$ the integrand contains an additional term with size
$\O(1)\cdot \ve^\alpha \vp_\ve$.  For $\ve>0 $ small, the bound (\ref{dif2}) must therefore be replaced with
\bel{dif3}
\int_{\tau'}^{\tau''} \Big[ \H^1\bigl(\partial \Hat 
\Omega_\ve(t)\bigr) - \H^1\bigl(\partial \Omega(t)\bigr)\Big] \, dt
~\leq~ {\ve^{\gamma+1}\over 3} \int_{\tau'}^{\tau''} \bigl[ \omega(t,\xi_1)-\omega(t,\xi_2)\bigr]\, dt
+ \O(1)\cdot \ve^{\alpha+1}.
\eeq
Finally, by the assumption on the total variation of the boundary length $\H^1\bigl(\partial\Omega(t)\cap V\bigr)$,
 it follows that 
also the length of the perturbed boundaries 
$t\mapsto \H^1\bigl(\Hat \Omega_\ve\cap V\bigr)$ has uniformly bounded variation as a function of time
$t\in [\tau', \tau'']$.

Since the map $t\mapsto t_\ve(t)$ is increasing and satisfies (\ref{tepp}), applying Lemma~\ref{l:41} below
with $\delta\doteq \ve^{\gamma+2}$ we obtain
\bel{dif4}
\int_{\tau'}^{\tau''} \Big[ \H^1\bigl(\partial \Hat 
\Omega_\ve(t_\ve(t))\bigr) - \H^1\bigl(\partial \Hat\Omega(t)\bigr)\Big] \, dt
~=~\O(1)\cdot \ve^{\gamma+2}.
\eeq
Combining (\ref{dif3}) with (\ref{dif4}), by the choice of $\alpha$ at (\ref{ag}) we conclude
$$\bega{l}\ds
\int_{\tau'}^{\tau''} \Big[ \H^1\bigl(\partial \Tilde
\Omega_\ve(t)\bigr) - \H^1\bigl(\partial\Omega(t)\bigr)\Big] \, dt
\\[4mm]
\ds
\qquad\leq~ {\ve^{\gamma+1}\over 3} \int_{\tau'}^{\tau''} \bigl[ \omega(t,\xi_1)-\omega(t,\xi_2)\bigr]\, dt
+\O(1)\cdot \ve^{\gamma+2}~<~0\enda
$$
for all $\ve>0$ sufficiently small.
Hence $\Tilde \Omega_\ve(\cdot)$ yields a slicing of $V$ with strictly lower cost.
This contradiction establishes the theorem.
\endproof

\begin{figure}[ht]
\centerline{\hbox{\includegraphics[width=8cm]{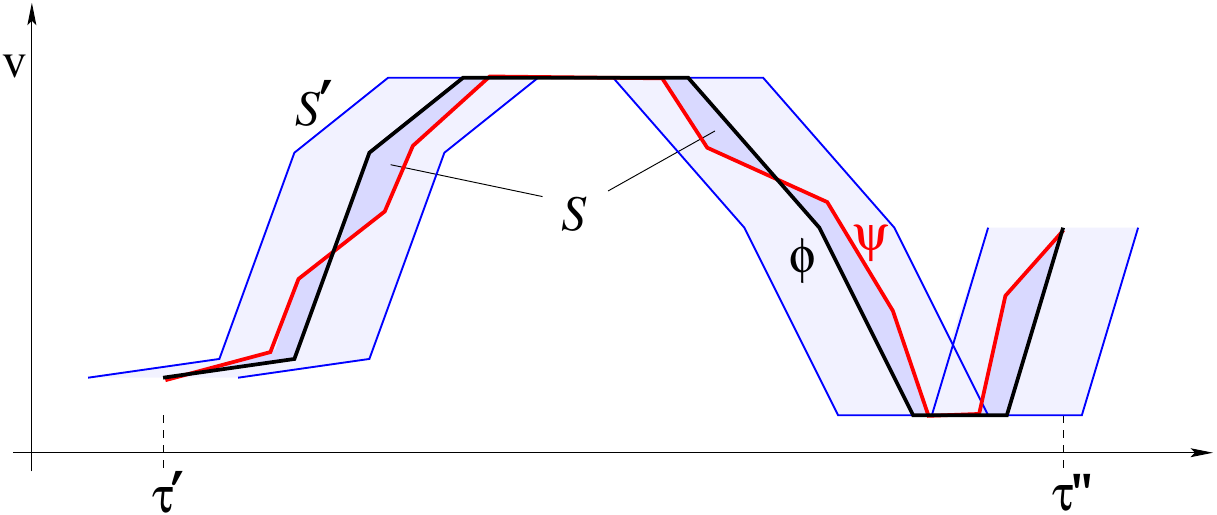}}}
\caption{\small Proving Lemma~\ref{l:41}.   Here $\S$ is the darker region bounded between the graphs of 
$\phi$ and $\psi$, while $\S'$ is the lightly shaded region.}
\label{f:csm23}
\end{figure}

\begin{lemma}\label{l:41}
Let $\phi:[\tau',\tau'']\mapsto\R$ be a function with bounded variation.
Let $t\mapsto \zeta(t)$ be an increasing  map of $[\tau',\tau'']$ onto itself with
$$\bigl|\zeta(t)-t\bigr|~\leq~\delta\qquad\forall ~t\in [\tau',\tau''].$$
Then 
$$
\int_{\tau'}^{\tau''} \bigl| \phi(\zeta(t))-\phi(t)\bigr|\, dt~\leq~2\delta\cdot \TV\{ \phi\}.
$$
\end{lemma}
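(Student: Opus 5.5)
The plan is to reduce to the case where $\phi$ is monotone, and then to estimate the integral as the area of a planar region, as suggested by Fig.~\ref{f:csm23}.

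\emph{Reduction to the monotone case.} Every function of bounded variation on $[\tau',\tau'']$ can be written as $\phi=\phi_1-\phi_2$ with $\phi_1,\phi_2$ nondecreasing and
$$\TV\{\phi_1\}+\TV\{\phi_2\}~=~\TV\{\phi\}.$$
For instance, take $\phi_1$ to be the positive variation function and $\phi_2$ the negative one. Since
$$\bigl|\phi(\zeta(t))-\phi(t)\bigr|~\leq~\sum_{k=1,2}\bigl|\phi_k(\zeta(t))-\phi_k(t)\bigr|,$$
it suffices to prove the inequality for a nondecreasing $\phi$ with $\TV\{\phi\}=\phi(\tau'')-\phi(\tau')$. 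The values of $\phi$ at its countably many jump points do not matter, since changing them only affects a null set of times.

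\emph{Monotone case.} Split $[\tau',\tau'']$ into the set where $\zeta(t)\geq t$ and the set where $\zeta(t)<t$. On the first set, monotonicity of $\phi$ and $|\zeta(t)-t|\leq\delta$ give
$$0~\leq~\phi(\zeta(t))-\phi(t)~\leq~\phi\bigl(\min\{t+\delta,\tau''\}\bigr)-\phi(t).$$
On the second set one similarly gets the bound $\phi(t)-\phi\bigl(\max\{t-\delta,\tau'\}\bigr)$. Hence
$$\int_{\tau'}^{\tau''}\bigl|\phi(\zeta(t))-\phi(t)\bigr|\,dt~\leq~\int_{\tau'}^{\tau''}\Bigl[\phi\bigl(\min\{t+\delta,\tau''\}\bigr)-\phi\bigl(\max\{t-\delta,\tau'\}\bigr)\Bigr]dt.$$
This integral is the area $\S$ between two graphs, which are horizontal translates of the graph of $\phi$ by $\pm\delta$, truncated at the endpoints. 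To bound it, write
$$\phi(b)-\phi(a)~=~\mu\bigl(]a,b]\bigr),$$
where $\mu$ is the Lebesgue--Stieltjes measure of $\phi$, with total mass $\TV\{\phi\}$. By Fubini's theorem the integral becomes
$$\int_{]\tau',\tau'']}\meas\bigl\{t\,;~~|s-t|\leq\delta\bigr\}\,d\mu(s)~\leq~2\delta\,\mu\bigl(]\tau',\tau'']\bigr)~=~2\delta\,\TV\{\phi\}.$$

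\emph{Assembling the estimate.} Summing the two monotone pieces gives the claimed bound $2\delta\cdot\TV\{\phi\}$. The hypothesis that $\zeta$ is increasing and onto is not really needed for this argument; only $|\zeta(t)-t|\leq\delta$ is used.

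The only delicate step is the Fubini exchange, namely making the pointwise bound exact at the jumps of $\phi$ (left versus right limits and half-open intervals). I would handle it by using the right-continuous representative of $\phi$, which changes $\phi(t)$ on a null set only. The term $\phi(\zeta(t))$ needs separate care, because $\zeta$ might map a positive-measure set of times onto a single jump point of $\phi$. Monotonicity still gives the sandwich between $\phi\bigl(\max\{t-\delta,\tau'\}\bigr)$ and $\phi\bigl(\min\{t+\delta,\tau''\}\bigr)$ up to the value at that jump point. So I would bound $\phi(\zeta(t))$ between the left and right limits at $t\pm\delta$ and then run the Fubini argument with closed intervals $[t-\delta,t+\delta]$, which does not change the bound $2\delta$.
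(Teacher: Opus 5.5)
Your argument is correct, but it takes a different route from the paper's. The paper first reduces to a continuous $\phi$, by an approximation argument it does not spell out. It then uses the intermediate value theorem: every point $(t,v)$ with $v$ between $\phi(t)$ and $\phi(\zeta(t))$ can be written as $(t,\phi(t^*))$ with $t^*$ between $t$ and $\zeta(t)$. So the region $\S$ between the two graphs lies inside the horizontal $\delta$-thickening $\S'$ of the graph of $\phi$, whose area is at most $2\delta\cdot\TV\{\phi\}$. The paper states this last bound without proof; it is essentially a Banach indicatrix count.

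You replace continuity by monotonicity. The Jordan decomposition lets you trap both $\phi(t)$ and $\phi(\zeta(t))$ between $\phi(\max\{t-\delta,\tau'\})$ and $\phi(\min\{t+\delta,\tau''\})$. Fubini against the Stieltjes measure then gives exactly the area bound that the paper leaves implicit.

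Your version handles jumps directly and avoids the approximation step. That step actually requires some care: $\phi\circ\zeta$ depends on the values of $\phi$ at single points whenever $\zeta$ is constant on an interval. So $L^1$ approximation of $\phi$ is not enough; one needs, for example, pointwise convergence everywhere with controlled variation. The paper's version is shorter and more geometric. Neither argument really uses that $\zeta$ is increasing: both need only $|\zeta(t)-t|\le\delta$, $\zeta(t)\in[\tau',\tau'']$, and measurability of $\phi\circ\zeta$.

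You can drop the last paragraph of your proposal. You should also drop the sentence claiming that the values of $\phi$ at its jump points do not matter. As you notice later yourself, this is false for the term $\phi(\zeta(t))$, and the lemma must be read with the pointwise variation of the given $\phi$. The sandwich holds at every $t$ for any nondecreasing $\phi$, whatever its values at the jumps. Moreover, $\phi(b)-\phi(a)\le\mu([a,b])$, where $\mu$ is the measure with $\mu([a,b])=\phi(b+)-\phi(a-)$, using the conventions $\phi(\tau'-)\doteq\phi(\tau')$ and $\phi(\tau''+)\doteq\phi(\tau'')$. This measure has total mass $\phi(\tau'')-\phi(\tau')$. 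So you can run Fubini with closed intervals from the start, without ever passing to a right-continuous representative.
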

{\bf Proof.} By an approximation argument, we can assume that $\phi$ is continuous.
Consider the continuous function $\psi(t)\doteq\phi(\zeta(t))$ and 
the sets (see Fig.~\ref{f:csm23})
$$\S~\doteq~\Big\{ (t,v)\,;\quad t\in [\tau',\tau''],~~v = \theta \psi(t) +(1-\theta) \phi(t),~~\theta\in [0,1]\Big\},$$
$$\S'~\doteq~\Big\{ (s,\phi(t))\,;~~t\in [\tau',\tau''],~~|s-t|\leq\delta\Big\}.$$
Since $\S\subseteq\S'$, we  have
$$\|\psi-\phi\|_{\L^1} ~=~\meas(\S)~\leq~\meas(\S')~\leq~2\delta\cdot \TV\{\phi\}.$$
\endproof
\v
\begin{corollary} In the same setting as Theorem~\ref{t:41}, 
 for a.e.~time $t\in [\tau', \tau'']$, the curvature of the boundary 
$\partial\Omega(t)$
is constant at a.e.~point $x(t,\xi)$ where it is moving, i.e.~where $\beta(t,\xi)>0$.
\end{corollary}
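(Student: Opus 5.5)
The plan is to apply Theorem~\ref{t:41} on shrinking time intervals and pass to the limit by Lebesgue differentiation. Work in the setting of Theorem~\ref{t:41}, but apply it with both marked points $\xi_1,\xi_2$ taken in the same tube. Concretely, for a given tube $[\tau',\tau'']\times I$, restrict the parameterization to $[s,s+h]\times (J_1\cup J_2)$, where $J_1,J_2\subset I$ are disjoint closed subintervals containing $\xi_1$ and $\xi_2$ in their interiors. Properties (i)--(v) are inherited under restriction, and so is the bounded variation of the boundary length. Pairs of points lying in different tubes $I_1,I_2$ are handled the same way.

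Fix $\xi_1,\xi_2$ in the interiors of the intervals. Assume $\beta(t,\xi_1)>0$ and $\beta(t,\xi_2)>0$ for $t$ in some interval $[a,b]$. Then for every subinterval $[s,s+h]\subset[a,b]$ the hypotheses $\beta(s,\xi_1)>0$ and $\beta(s+h,\xi_2)>0$ hold. By Theorem~\ref{t:41}, $\int_s^{s+h}\omega(t,\xi_1)\,dt\ge \int_s^{s+h}\omega(t,\xi_2)\,dt$. Exchanging the roles of $\xi_1$ and $\xi_2$ is allowed, since $\beta(s,\xi_2)>0$ and $\beta(s+h,\xi_1)>0$ as well, and it gives the reverse inequality. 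So the two integrals are equal on every subinterval. Since $t\mapsto\omega(t,\xi_i)$ is bounded and measurable by (v), Lebesgue differentiation gives $\omega(t,\xi_1)=\omega(t,\xi_2)$ for a.e.\ $t\in[a,b]$.

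To pass from fixed pairs to the statement "for a.e.\ $t$, a.e.\ $\xi$", the plan has three steps.

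\textbf{Step 1 (reduce to open sets).} Since $\beta$ is continuous, the set $\{(t,\xi)\,;~\beta(t,\xi)>0\}$ is relatively open in $W$. It is therefore a countable union of open rectangles $R_k=\,]a_k,b_k[\,\times J_k$.

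\textbf{Step 2 (countable dense pairs).} Fix a countable dense set $D$ of parameters. For every pair $\xi,\xi'\in D$ whose segments $\{\xi\}\times\,]a,b[$ and $\{\xi'\}\times\,]a,b[$ lie in positivity rectangles with a common time interval $]a,b[$, the argument above gives $\omega(t,\xi)=\omega(t,\xi')$ off a null set of times. A countable union of such null sets is null. So for a.e.\ $t$ the function $\xi\mapsto\omega(t,\xi)$ takes a single value on $D\cap\{\beta(t,\cdot)>0\}$.

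\textbf{Step 3 (extend from $D$ to a.e.\ $\xi$).} This is the main obstacle, because $\omega$ is defined only a.e.\ and is not assumed continuous in $\xi$. I would try to use the continuity-in-mean condition (\ref{lepo}) to get a limit in $L^1_t$ rather than pointwise. For any $\xi$ in a positivity rectangle, choose $\xi_n\in D$ with $\xi_n\to\xi$. Then (\ref{lepo}) gives $\int|\omega(t,\xi_n)-\omega(t,\xi)|\,dt\to0$. Hence $\omega(t,\xi)=c(t)$ for a.e.\ $t$, where $c(t)$ is the common value on $D$. Finally, apply Fubini on each rectangle $R_k$: for every $\xi$, the equality holds for a.e.\ $t$, so for a.e.\ $t$ it holds for a.e.\ $\xi$ with $\beta(t,\xi)>0$.

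The point needing the most care is making the common time interval in Step 2 consistent across different rectangles $R_k$. The plan is to handle this by chaining overlapping rectangles within a fixed time slab. A simpler alternative is to intersect with rational time intervals $]p,q[$: for each such interval, consider the set of $\xi\in D$ with $\beta>0$ on $[p,q]$. For a.e.\ $t$, every positivity point at time $t$ belongs to such a family for some rational interval containing $t$, so the constancy statements from different rectangles can be linked.
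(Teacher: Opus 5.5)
Your proof is correct, and its analytic core is the same as the paper's. You apply Theorem~\ref{t:41} on short time intervals, once with the roles of $\xi_1,\xi_2$ exchanged, to get equality of the time integrals of $\omega(\cdot,\xi_1)$ and $\omega(\cdot,\xi_2)$, and then differentiate.

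The difference is in how you handle the uncountably many pairs. The paper fixes the time first. By Fubini and Lebesgue differentiation it selects a full-measure set $\A$ of times $\tau$ that are Lebesgue points of $\omega(\cdot,\xi)$ for a.e.~$\xi$. It then argues by contradiction at one such $\tau$, applying (\ref{intom}) on $[\tau-\delta,\tau+\delta]$ to two points of $\mathcal B_\tau$ with different curvatures. You instead fix the pair first and obtain equality for a.e.~$t$. You then pass to a countable dense set $D$ with rational time intervals, use the $L^1$-in-time continuity (\ref{lepo}) to extend to every $\xi$, and finish with Fubini.

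The paper's route is shorter. At this stage it needs only joint measurability of $\omega$, not (\ref{lepo}). Your route gives a slightly stronger intermediate fact: for every $\xi$, $\omega(t,\xi)=c(t)$ for a.e.~$t$ with $\beta(t,\xi)>0$. It also spells out two points the paper leaves implicit. The reverse inequality comes from swapping the two tubes, and two points in the same interval $I_i$ are handled by restricting to disjoint subintervals.

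Two small remarks on your write-up. First, the consistency issue you flag as delicate is not one. If $\beta(t,\xi)>0$ and $\beta(t,\xi')>0$, continuity gives a rational interval around $t$ on which both are positive. So the rational-interval version of Step~2 compares any two such points of $D$ directly, and no chaining is needed. Second, in Step~3 the $L^1$ limit should be taken over the time interval $]a_k,b_k[$ of a rectangle $R_k$ containing $\{\xi\}\times\,]a_k,b_k[$. This is because $c(t)=\omega(t,\xi_n)$ is only known where $\beta(t,\xi_n)>0$, which holds there once $\xi_n\in J_k$. The conclusion is then "for a.e.~$t\in\,]a_k,b_k[$", and this is exactly what your final Fubini step on each $R_k$ uses.
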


{\bf Proof.} {\bf 1.} By the assumed $\C^{1,1}$ regularity of the map $(t,\xi)\mapsto x(t,\xi)$,
the map $(t,\xi)\mapsto\omega(t,\xi)$ is bounded and measurable on $[\tau',\tau'']\times (I_1\cup I_2)$.
Hence by the Lebesgue differentiation theorem there is a set of times ${\mathcal A}\subseteq [\tau',\tau'']$, 
with $meas(\A)=\tau''-\tau'$, such that 
\begi
\item
{\it every $\tau\in \A$ is a Lebesgue point of the map $t\mapsto \omega(t,\xi)$, for every $\xi\in (I_1\cup I_2)\setminus
{\mathcal N}_\tau$, where ${\mathcal N}_\tau$ is a set of measure zero, possibly depending on $\tau$.}
\endi
Next, assume by contradiction that at some time $\tau\in \A$ 
 the curvature
$\omega(t,\xi)$ is not a.e.~constant on the set 
$$\mathcal B_\tau\,\doteq\,\bigl\{\xi \in {\rm Int}(I_1\cup I_2) \,;~ \beta(\tau,\xi)>0\bigr\}.$$
%
%
We can thus choose
 $\xi_1,\xi_2\in\mathcal B_\tau$ with $\omega(\tau,\xi_1)\neq\omega(\tau,\xi_2)$ and such that $\tau$ is a Lebesgue point for $\omega(\cdot,\xi_i)$, $i=1,2$.
 Recalling  that the map $(t,\xi)\mapsto\beta(t,\xi)$ is continuous,
for any $\delta>0$ sufficiently small, using (\ref{intom}) with $[\tau',\tau'']$ replaced by the smaller interval $[\tau-\delta,\tau+\delta]$ where both $\beta(\cdot,\xi_i)>0$, we deduce
$$\int_{\tau-\delta}^{\tau+\delta} \omega(t,\xi_1) \, dt ~=~\int_{\tau-\delta}^{\tau+\delta} \omega(t,\xi_2)\, dt\,.$$
Since $\tau$ is a Lebesgue point of $\omega(\cdot,\xi_i)$, letting $\delta\to 0$ we obtain
$$\omega(\tau,\xi_1)~=~
\lim_{\delta\to 0}\,{1\over 2\delta}\int_{\tau-\delta}^{\tau+\delta} \omega(t,\xi_1) \, dt ~=~ 
\lim_{\delta\to 0}\,{1\over 2\delta}\int_{\tau-\delta}^{\tau+\delta} \omega(t,\xi_2) \, dt~=~\omega(\tau,\xi_2)\,,$$
reaching a a contradiction. \endproof

\section{Optimality conditions at junctions}
\label{sec:5}
\setcounter{equation}{0}
The proof of 
Theorem~\ref{t:21} established the existence of optimal slicing strategies  $\Omega(\cdot)$ within a class of 
functions with BV regularity.  On the other hand, the necessary conditions for optimality 
proved in Theorem~\ref{t:41} require the sets $\Omega(t)$ to have $\C^{1,1}$  boundary.
Indeed, this assumption is needed to uniquely define the perpendicular curves $t\mapsto x(t,\xi)$.

The aim of this section is to partially bridge this regularity gap, ruling out certain configurations 
where the sets $\Omega(t)$ have corners. We consider two cases.

\begi
\item[{\bf (A1)}] {\bf (Two moving arcs)} {\it
There exists $\tau,\delta_0>0$ such that, for $t\in [\tau-\delta_0, \tau+\delta_0]$,  the boundary $\partial\Omega(t)$ contains two adjacent arcs  joining at a point $P(t)$ 
at an angle $\theta(t)$.    These arcs admits  $\C^{1,1}$  parameterizations of the form
$$\left\{ \bega{l}(t,\xi)\mapsto x_1(t,s),\quad s\in [-s_0, 0],~~|t-\tau|\leq \delta_0\,,\\[2mm]
(t,s)\mapsto x_2(t,s),\quad s\in [0,s_0],~~~|t-\tau|\leq \delta_0\,,\enda
\right.$$
with
$$x_1(t,0) = x_2(t,0)
=P(t),\qquad\qquad \bigl| x_{1,s}(\tau,0)\bigr|>0,\quad \bigl| x_{2,s}(\tau,0)\bigr|>0.$$
Moreover, calling $\bfn_i(t,s)$ the unit outer normal to the boundary $\partial \Omega(t)$ at the point $x_i(t,s)$,
one has
$$\la  x_{i,t}(t,s),\, \bfn_i(t,s)\ra~\geq~c_0~>~0\qquad\qquad\forall t,s.$$
}\endi

\begi
\item[{\bf (A2)}] {\bf (A moving arc and a stationary arc)} {\it There exists $\tau,\delta_0>0$ such that, for $t\in [\tau-\delta_0, \tau+\delta_0]$,  the boundary $\partial\Omega(t)$ contains two adjacent arcs  joining at a point $P(t)$ 
at an angle $\theta(t)$.    These arcs admits  $\C^{1,1}$  parameterizations of the form
$$\left\{ \bega{l}(t,s)\mapsto x_1(t,s),\quad s\in [-s_0, 0],~~|t-\tau|\leq \delta_0\,,\\[2mm]
s\mapsto x_2(s),\qquad\quad ~ s\in [0,s_0],\enda
\right.$$
\bel{par2}\qquad\qquad  x_1(t,0) = x_2(s(t))
=P(t),\qquad   \bigl| x_{1,s}(\tau,0)\bigr|>0,\quad \bigl| x_{2,s}(0)\bigr|>0.\eeq 
Calling $\bfn_1(t,s),\bfn_2(s)$ the unit outer normal to the boundary $\partial \Omega(t)$ at the points $x_i(t,s)$, $x_2(s)$, respectively, one has
\bel{move2} \la x_{1,t}(t,s),\, \bfn_1(t,s)\ra~\geq~c_0~>~0\qquad\qquad\forall t,s.\eeq
}\endi

\begin{figure}[ht]
\centerline{\hbox{\includegraphics[width=14cm]{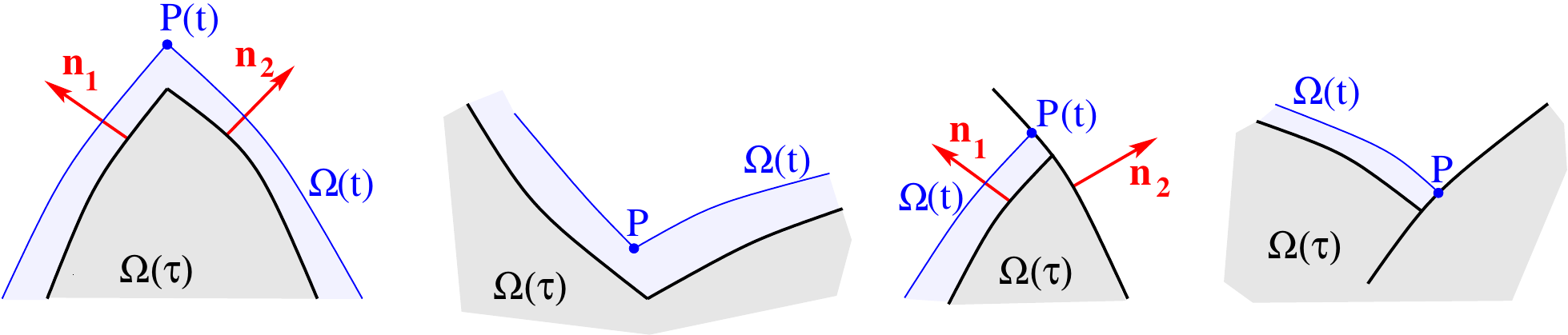}}}
\caption{\small Left two figures: the setting described in {\bf (A1)}, where two moving arcs join at a point $P(t)$, forming an outward or an inward corner.  Right two figures: the setting described in {\bf (A2)}, 
where a moving arc joins a stationary arc, forming a corner at the point $P(t)$. }
\label{f:csm24}
\end{figure}

The next result shows that, for an optimal slicing,
the boundaries of the sets $\partial\Omega(t)$ cannot have corners if at least
one of the two adjacent arcs is moving.
%

\begin{theorem}\label{t:51} 
Let $t\mapsto \Omega(t)$ be an optimal slicing of the set $V$.
In the settings described at {\bf (A1)} or {\bf (A2)},
 the  two arcs $s\mapsto x_i(\tau,s)$, $i=1,2$, must be tangent at $P(\tau)$.
\end{theorem}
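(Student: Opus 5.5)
We argue by contradiction. Assume that the two arcs meet at $P(\tau)$ at an angle $\theta(\tau)\neq\pi$, so that $\partial\Omega(\tau)$ has a genuine corner there. By continuity of the $\C^{1,1}$ parameterizations, $|\theta(t)-\pi|\geq\theta_0>0$ for all $|t-\tau|\leq\delta$ with $\delta$ small. The idea is the classical corner-cutting argument. Near a corner, one can remove (or add) a small triangle-like region of area $\O(\rho^2)$. This saves a length of order $c(\theta_0)\,\rho$, where $c(\theta_0)=2-2\sin(\theta/2)>0$ is the gain from replacing two sides of length $\rho$ by the chord. A compensating area of the same size is then added (or removed) along a smooth moving portion of the boundary, at a length cost only of order $\rho^2$ times a curvature bound. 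This is the same mechanism as in (\ref{bdec}), but now the gain is first order in $\rho$ while the cost is second order.

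\textbf{Construction.} For an outward corner (left pictures of Fig.~\ref{f:csm24}, $\theta<\pi$) we cut the corner: for $t\in[\tau-\delta,\tau+\delta]$ set $\Omega_\rho(t)=\Omega(t)\setminus T_\rho(t)$. Here $T_\rho(t)$ is the region bounded by the two arcs and the segment joining $x_1(t,-\rho)$ and $x_2(t,\rho)$ (respectively $x_2(s(t)+\rho)$ in case (A2)). Its area is $a_\rho(t)=\O(\rho^2)$ and it saves length $\geq c\rho$. To restore the area constraint we push outward the moving arc $x_1$ at some distance from the corner, on a parameter interval $|s-\bar s|\leq\rho$. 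We use the bump $\sigma(t)\vp_\rho(s-\bar s)\,\bfn_1$ exactly as in (\ref{pert}), with amplitude $\sigma\sim\rho$ tuned so that the added area equals $a_\rho(t)$. By (\ref{xxe}) this costs only $\O(\rho^2)$ in length. For an inward corner ($\theta>\pi$) we fill the notch instead, adding $T_\rho(t)$, and compensate by an inward bump on the moving arc. In case (A2) we always perturb only the moving arc $x_1$, since the stationary arc must not be pushed backward. Finally, the perturbation is switched on and off by linearly ramping $\rho(t)$ from $0$ to $\rho$ on time intervals of length $\rho^{\kappa}$ near $\tau\pm\delta$, as in (\ref{suec}).

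\textbf{Admissibility.} Monotonicity (\ref{adm2}) is checked as in Step 5 of the proof of Theorem~\ref{t:41}. The key is the uniform bound $\la x_{i,t},\bfn_i\ra\geq c_0$ assumed in (A1)/(A2). The time derivatives of the cut segment endpoints and of the bump amplitude are $\O(\rho)$, or $\O(\rho^{1-\kappa})$ on the ramps. Hence the perturbed boundary still moves outward with normal speed $\geq c_0/2$ once $\rho$ is small, provided $\kappa<1$. The corner cut moves with the arcs, so $T_\rho(s)$ region increments are covered by the growth of $\Omega$. Area mismatches of higher order are absorbed by the time rescaling of Step 7 together with Lemma~\ref{l:41}. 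These contribute $\O(\rho^3)$, given bounded variation of the boundary length, which follows from the $\C^{1,1}$ structure locally.

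\textbf{Cost comparison.} The cost change is at most $-c\rho\cdot 2\delta+\O(\rho^2)+\O(\rho^{1+\kappa})$, which is negative for small $\rho$, contradicting optimality. The main obstacle I expect is making the corner region's time evolution compatible with monotonicity in the inward-corner case of (A2). There the notch is filled by a region that grows while the stationary arc is fixed, and the junction $P(t)$ slides along $x_2$ with speed $\dot s(t)$ that may be large. I would first try to define the filling chord with its endpoint on $x_2$ at fixed arclength $s(t)+\rho$ and verify, using (\ref{move2}) and the transversality $\theta\neq\pi$, that this endpoint moves forward along $x_2$. If that fails, I would use a chord anchored at a fixed point of $x_2$ chosen once on the whole interval $[\tau-\delta,\tau+\delta]$.
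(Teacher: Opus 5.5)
\textbf{There is a genuine gap: the compensating bump costs as much length as the corner cut saves.} Your overall scheme is the same as the paper's: cut the corner, restore the area with a bump on the moving arc, and get monotonicity from the outward speed $c_0$. The failure is in the cost estimate for the bump.

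You place the bump $\sigma\vp_\rho(s-\bar s)\bfn_1$ on $|s-\bar s|\le\rho$, so its width equals the size $\rho$ of the cut. Matching the area $a_\rho\approx\frac12\rho^2|\sin\theta|$ then forces $\sigma\approx\rho|\sin\theta|$ (arc-length parameters). The perturbed arc therefore has slope $\sigma\vp_\rho'=\O(1)$, which is not small.

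In (\ref{xxe}) the curvature term is indeed $\O(\sigma\rho)=\O(\rho^2)$. The quadratic term, however, integrates to $\O(1)\cdot\sigma^2\int(\vp_\rho')^2\,ds=\O(\sigma^2/\rho)=\O(\rho)$, which is the same order as the gain. Concretely, $\int(\vp')^2=2/3$, so the bump adds about $\sigma^2/(3\rho)=\frac13\rho\sin^2\theta$ of length. The cut saves $2\rho\bigl(1-\sin(\theta/2)\bigr)$. For a shallow corner, $\theta=\pi-\phi$ with $\phi$ small, these are $\approx\rho\phi^2/3$ against $\approx\rho\phi^2/4$, so your competitor has \emph{larger} cost. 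Your final bound $-c\rho\cdot 2\delta+\O(\rho^2)+\O(\rho^{1+\kappa})$ is missing a positive term of order $\rho\cdot 2\delta$.

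\textbf{The missing idea is to make the cut much smaller than the bump width.} In the paper the cut has depth $\ell(t)=\ve^\gamma\psi_\ve(t-\tau)\le\ve^{\gamma+1}$ with $\gamma>1$, while the bump has width $\ve$. Hence $\sigma_\ve=\O(\ve^{2\gamma+1})$, the slope is $\O(\ve^{2\gamma})$, and the bump costs $\O(\ve^{2\gamma+2})+\O(\ve^{4\gamma+1})$ per unit time against a gain of at least $\kappa_0\ell(t)$. In your setting it suffices to use a bump of fixed width $\eta$, independent of $\rho$. Then $\sigma\sim\rho^2/\eta$ and the cost really is $\O(\rho^2)$. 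With that change, your fixed time window with ramps, in place of the paper's tent profile $\psi_\ve$ in time, should go through.

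Two smaller points:
\begin{itemize}
\item You fix $\sigma(t)$ by exact area matching, so the time rescaling is unnecessary and should be dropped. Lemma~\ref{l:41} would require bounded variation of the whole length $t\mapsto\H^1(\partial\Omega(t)\cap V)$. That is not assumed in Theorem~\ref{t:51} and does not follow from the local $\C^{1,1}$ structure.
\item The obstacle you anticipate in case (A2) does not arise. $\dot P(t)=x_{1,t}(t,0)$ is bounded and tangent to the stationary arc, so by (\ref{move2}) $\la \dot P,\bfn\ra$ is bounded below by a positive constant near $\tau$, with $\bfn$ as in (\ref{bn}). A chord orthogonal to this fixed $\bfn$ therefore moves outward, which is how the paper handles it.
\end{itemize}
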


\begin{figure}[ht]
\centerline{\hbox{\includegraphics[width=10cm]{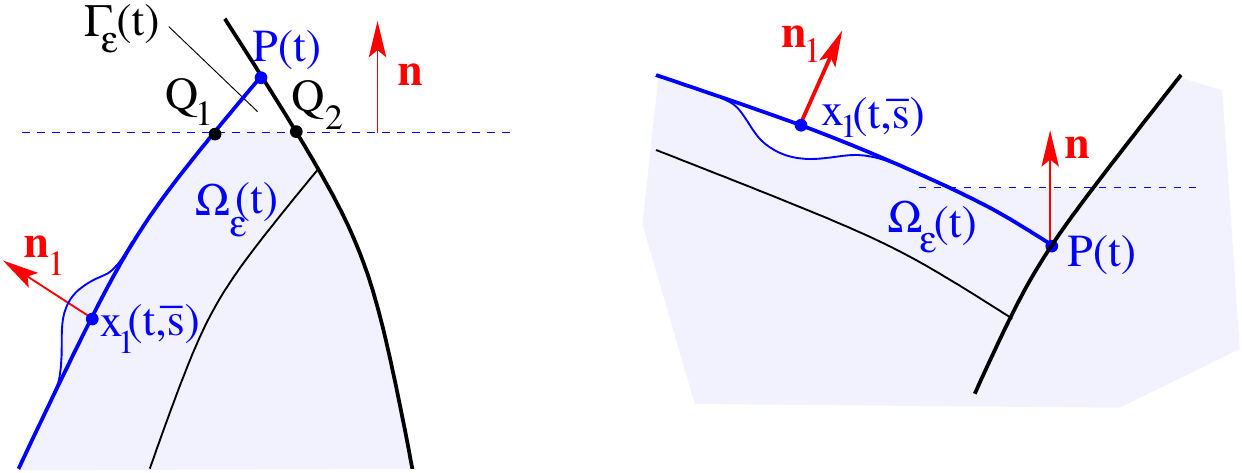}}}
\caption{\small Left: in the case of an outward corner, a better slicing strategy is obtained by
removing a triangle near the point $P(t)$ and adding a region with the same area in a neighborhood of $x_1(t,\bar s)$.   Right: in the case of an inward corner, a better slicing strategy is obtained by
adding a triangle near the point $P(t)$ and removing a region with the same area in a neighborhood of $x_1(t,\bar s)$. }
\label{f:csm25}
\end{figure}

{\bf Proof.} We shall give a proof in the case where the assumptions {\bf (A2)} are satisfied. The analysis of case {\bf (A1)}
is entirely similar.  
By the last two inequalities in (\ref{par2}), by a rescaling of $s$-variables we can assume 
\bel{par22} \bigl| x_{1,s}(\tau,s)\bigr|=1,\qquad \bigl| x_{2,s}(s)\bigr|=1\qquad\quad\forall s.\eeq 
Define the unit vector
\bel{bn}\bfn~\doteq~{\bfn_1(\tau,0) + \bfn_2(0)\over \bigr|\bfn_1(\tau,0) + \bfn_2(0)\bigl|}\,.
\eeq
For $\ve>0$ small enough, introduce the function
\bel{psidef}\psi_\ve(s)~\doteq~\max\{ \ve- |s|, \,0\}\eeq
and let $\vp_\ve$ be as in (\ref{presc}).
Moreover, fix a point $-s_0<\bar s<0$.

We analyze in detail the case where $\Omega(\tau)$ has an outward corner at
$P(\tau)$, where, to get a better strategy, we remove from $\Omega(t)$
a triangular region near the  corner (see Fig.~\ref{f:csm25}, left).
The case where $\Omega(\tau)$ has an inward corner at
$P(\tau)$ can be treated by adding to $\Omega(t)$ a triangular region near the corner
(see Fig.~\ref{f:csm25}, right).
 We omit the proof of this case,
being the computation entirely similar to the outward corner case. 
\v

As shown in Fig.~\ref{f:csm25}, left,  for $\ve>0$ small we
construct a slicing $\Omega_\ve(\cdot)$ of $V$ with lower cost
in the following way.

As a first step, consider the sets $\Hat \Omega_\ve(t)$ obtained by removing from $\Omega(t)$
a triangular region near the outward corner:

\bel{Hom1} \Hat\Omega_\ve(t)~\doteq~\Omega(t)\setminus \Gamma_\ve(t),\eeq
where $\Gamma_\ve(t)$ is the triangular region bounded by the curves $x_1(t,\cdot)$, $x_2(\cdot)$
and the straight line
\bel{Gep1} \Big\{
x\in \R^2\,;~~\la P(t)-x, \bfn\ra~=~\ve^\gamma \psi_\ve(t-\tau)\Big\},\eeq
for some exponent  $\gamma>1$.
The sets $\Hat\Omega_\ve(t)$ now have area $\leq t$.
To achieve the identity
\bel{areat}\caL^2\bigl(\Omega_\ve(t)\bigr)\,=\,t\qquad\qquad\forall t\in [0,T],\eeq
we enlarge each set $\Hat\Omega_\ve(t)$ by pushing  its boundary outward in a neighborhood of the 
point $x(t,\bar s)$.   More precisely, we define $\Omega_\ve(t)$ to be the set obtained from $\Hat \Omega_\ve(t)$ replacing the portion of the boundary $\bigl\{x_1(t, s)\,;~s\in [-s_0, 0]\bigr\}$
with 
\bel{x1ep} x_1^\ve(t,s)~=~x_1(t,s) + \sigma_\ve(t) \vp_\ve(s-\bar s) \bfn_1(t,s),\qquad\qquad
s\in [-s_0, 0],\eeq
for some $\sigma_\ve(t)\geq 0$.  Here $\vp_\ve$ is the function defined at (\ref{presc}), see Fig.~\ref{f:sm76}.
We claim that, for all $\ve>0$ small enough, the following holds.
\begi
\item[(i)] The function $t\mapsto \sigma_\ve(t)$ is uniquely determined by the requirement (\ref{areat}).
\item[(ii)] The multifunction $t\mapsto \Omega_\ve(t)$ is a slicing of $V$.
\item[(iii)] The cost of $\Omega_\ve(\cdot)$ is strictly smaller than the cost of the original strategy $\Omega(\cdot)$.
\endi
\v
The above claims (i)--(iii) will be proved in  several steps.
\v
{\bf 1.} Since $\Omega(\tau)$ has an outward angle at $P(\tau)$, the tangent vectors
$${\bf t}_1~\doteq~x_{1,s}(\tau,0)\,,\qquad\qquad {\bf t}_2~\doteq~x_{2,s}(0)\,,$$
which are unit vectors by (\ref{par22}), are distinct. Set
$$c_1~\doteq~ \langle {\bf t}_1,\bfn\rangle>0\,,\qquad\qquad c_2~\doteq~-\langle {\bf t}_2,\bfn\rangle>0\,,$$
with $\bfn$ as in (\ref{bn}). 
By the continuity of the maps $t\mapsto x_{1,s}(t,0)$ and $t\mapsto x_{2,s}(s(t))$, since $c_1,c_2>0$ at $t=\tau$,
there exist $\delta_1>0$ such that
\bel{c12t}
c_1(t)~\doteq~\langle x_{1,s}(t,0),\bfn\rangle~>~0\,,\qquad c_2(t)~\doteq~-\langle x_{2,s}(s(t)),\bfn\rangle~>~0,
\eeq
 for all $t\in[\tau-\delta_1,\tau+\delta_1]$. In the following we shall
take $\ve<\delta_1$.
\v
{\bf 2.} Together with the point $P(t)$ we let $Q_1(t)$ and $Q_2(t)$ be the two lower vertices of the
triangular region $\Gamma_\ve(t)$ (see Fig.~\ref{f:csm25}, left).  These are the points where the curves 
$x_1(t,\cdot)$ and $x_2(\cdot)$ intersect the line (\ref{Gep1}).   Set
$$\ell(t)\,\doteq\, \ve^\gamma\psi_\ve(t-\tau).$$
Note that the definition (\ref{psidef}) implies  $\ell(t)\leq \ve^{\gamma+1}$ and
$\ell(t)=0$ for $|t-\tau|\ge \ve$.

We now have $Q_1(t) = x_1\bigl(t,\rho_1(t)\bigr)$, $Q_2(t)= x_2 \bigl( s(\rho_2(t)\bigr)$, where
$\rho_1,\rho_2$ are determined by the equations
\bel{r12eq}\la P(t)-x_1(t,-\rho_1(t)),\,\bfn\ra\,=\,\ell(t)\,,\qquad\quad 
\la P(t)-x_2(s(\rho_2(t))),\,\bfn\ra\,=\,\ell(t)\,.
\eeq

By the $\C^{1,1}$ regularity assumed in {\bf (A2)}, the maps $s\mapsto x_{1,s}(t,s)$ and
$s\mapsto x_{2,s}(s)$ are Lipschitz continuous. By (\ref{c12t}), an application of the implicit function theorem
yields the local existence of unique values $\rho_1(t),\rho_2(t)$ which satisfy (\ref{r12eq}). Moreover
\bel{rho12}
\rho_1(t)\,=\,\frac{\ell(t)}{c_1(t)}+\O(1)\cdot \ell^2(t),
\qquad\quad
\rho_2(t)\,=\,\frac{\ell(t)}{c_2(t)}+\O(1)\cdot \ell^2(t).
\eeq
%
%
%
%
\v
\noindent{\bf 3.} In this step we estimate the change in area and the change in perimeter, determined by the removal
of the triangular region $\Gamma_\ve(t)$.

{}From the above construction one immediately obtains an upper bound on the area:
\bel{areaGamma}
\caL^2\big(\Gamma_\ve(t)\bigr)~\leq~C_0\cdot \ell^2(t) ~=~C_0 \ve^{2\gamma} \bigl(\ve -|t-\tau|\bigr)^2
\qquad\forall~~ t\in[\tau-\ve,\tau+\ve],
\eeq
for some constant $C_0$.

To estimate the decrease in the perimeter we observe that
$$Q_1(t)-P(t)=-\rho_1(t)x_{1,s}(t,0)+\O(1)\cdot \rho_1^2(t)\,,\qquad \quad Q_2(t)-P(t)=\rho_2(t)x_{2,s}(s(t))+\O(1)\cdot \rho_2^2(t),$$
$$Q_2(t)-Q_1(t)~=~\rho_1(t)x_{1,s}(t,0)+\rho_2(t)x_{2,s}(s(t))+\O(1)\cdot \bigl(\rho_1^2(t)+\rho_2^2(t)\bigr).$$
Since the two arcs removed have total length $\rho_1(t)+\rho_2(t)$, while the new segment  has length
$\bigl|Q_2(t)-Q_1(t)\bigr|$, the net change in boundary length is
\bel{DLraw}
\H^1\bigl(\partial\Hat\Omega_\ve(t)\bigr)-\H^1\bigl(\partial\Omega(t)\bigr)
\,=\,\Big|\rho_1(t)x_{1,s}(t,0)+\rho_2(t)x_{2,s}(s(t))\Big|-\bigl(\rho_1(t)+\rho_2(t)\bigr)
+\O(1)\cdot\bigl(\rho_1^2(t)+\rho_2^2(t)\bigr).
\eeq
Since the two unit vectors $x_{1,s}(t,0)$ and $x_{2,s}(s(t))$ are not parallel, and make a uniformly positive
angle for all $t$ sufficiently close to $\tau$, we conclude
\bel{DL}
\H^1\bigl(\partial\Hat\Omega_\ve(t)\bigr)-\H^1\bigl(\partial\Omega(t)\bigr)
~\leq\,-\,\kappa_0\,\ell(t)~=~-\kappa_0 \ve^\gamma \bigl(\ve-|t-\tau|\bigr)\qquad\forall t\in [\tau-\ve, \tau+\ve],
\eeq
for some constant $\kappa_0>0$.

Integrating (\ref{DL}) for $t\in[\tau-\ve,\tau+\ve]$  we
obtain
\bel{gaincorner}
\int_{\tau-\ve}^{\tau+\ve}\Bigl[\H^1\bigl(\partial\Hat\Omega_\ve(t)\bigr)
-\H^1\bigl(\partial\Omega(t)\bigr)\Bigr]dt
~\le\,-\kappa_0\,\ve^{\gamma+2}\,,
\eeq
for all $\ve>0$ sufficiently small.
\v
{\bf 4.} We are now ready to prove (i), showing that $\sigma_\ve(t)$ is uniquely determined.
Consider the area function
$$\sigma~\longmapsto~A_\ve(\sigma,t)~\doteq~\caL^2\Big(\Hat\Omega_\ve(t)\ \cup\ 
\bigl\{x_1(t,s)+\zeta\vp_\ve(s-\bar s)\bfn_1(t,s)\,;~s<0, ~\zeta\in [0, \sigma]\bigr\}\Big).$$
where $\bfn_1(t,s)$ denotes the unit outer normal at $x_1(t,s)$ and $\vp_\ve$ is the function at (\ref{presc}).
By (\ref{vpr1}), since the curve $s\mapsto x_1(s,t)$ is parametrized by arc-length, recalling (\ref{areaGamma})
we obtain
\bel{Aep}A_\ve(0,t)\,=\,\caL^2\bigl(\Hat\Omega_\ve(t)\bigr)\,\in\,\Big[ t -C_0 \ve^{2\gamma} \bigl(\ve -|t-\tau|\bigr)^2, ~ t\Big],\qquad\qquad {d\over d\sigma} A_\ve(\sigma,t)\bigg|_{\sigma=0}~=~{\ve\over 2}\,.\eeq
By the $\C^{1,1}$ regularity assumed in {\bf (A2)}, for all $\ve>0$ sufficiently small the implicit function theorem
yields a unique  $\sigma_\ve(t)$ such that 
\bel{asip}A_\ve\bigl(\sigma_\ve(t),t\bigr)~=~t\,.\eeq
We observe that, by (\ref{Aep}),
\bel{sigorder}
\sigma_\ve(t)~=~\O(1)\cdot \ve^{2\gamma+1}\qquad \hbox{for}\quad t\in[\tau-\ve,\tau+\ve]\,,
\eeq
while $\sigma_\ve(t)=0$ for $t\notin[\tau-\ve,\tau+\ve]$.
\v
\noindent{\bf 5.}  To estimate the increase in the perimeter caused by the deformation (\ref{x1ep}),
we use the same computation of (\ref{xxi1})--(\ref{xxe}) in the proof of Theorem~\ref{t:41}. 
Since  
$$\int\vp_\ve(s)\, ds\,=\,{\ve\over 2}\,,\qquad\int\bigl[(\vp_\ve'(s))^2+\vp^2_\ve(s)\bigr]\, ds \,=\,\O(1)\cdot\ve^{-1}\,,$$
by (\ref{sigorder}) it follows
\[
\bega{l}\ds\int_{-s_0}^0\Bigl(\bigl|x_{1,s}^\ve(t,s)\bigr|-\bigl|x_{1,s}(t,s)\bigr|\Bigr)ds\\[4mm]
\qquad\qquad\ds=\label{compest2}~-\,\omega_1(t,\bar s)\,\sigma_\ve(t)\int\vp_\ve(s-\bar s)\,ds
+\O(1)\cdot\sigma_\ve(t)^2\int\bigl[(\vp_\ve'(s-\bar s))^2+(\vp_\ve(s-\bar s))^2\bigr]ds\\[4mm]
\ds\qquad\qquad=~\O(1)\cdot \ve^{2\gamma+2}+\O(1)\cdot \ve^{4\gamma+1}.\enda
\]
Integrating over $t\in[\tau-\ve,\tau+\ve]$ we find
\bel{compcost}
\int_{\tau-\ve}^{\tau+\ve}\Bigl[\H^1\bigl(\partial\Omega_\ve(t)\bigr)
-\H^1\bigl(\partial\Hat\Omega_\ve(t)\bigr)\Bigr]dt
~=~\O(1)\cdot\ve^{2\gamma+3}+\O(1)\cdot\ve^{4\gamma+2}.
\eeq
Since $\gamma>1$, combining
(\ref{gaincorner}) with (\ref{compcost}) one obtains
\bel{netgain}\int_0^T\Bigl[\H^1\bigl(\partial\Omega_\ve(t)\bigr)
-\H^1\bigl(\partial\Omega(t)\bigr)\Bigr]dt
\,=\,
\int_{\tau-\ve}^{\tau+\ve}\Bigl[\H^1\bigl(\partial\Omega_\ve(t)\bigr)
-\H^1\bigl(\partial\Omega(t)\bigr)\Bigr]dt
\,\le\,-\frac{\kappa_0}{2}\,\ve^{\gamma+2}\,<\,0
\eeq
for all $\ve>0$ sufficiently small. This proves (iii).
\v
{\bf 6.} It remains to prove (ii), showing that  for all $\ve>0$ sufficiently 
small the modified map $t\mapsto \Omega_\ve(t)$ is still a slicing of $V$.

The construction performed in step {\bf 4} guarantees that $\caL^2(\Omega_\ve(t))=t$ for every $t$. 
We claim that the
monotonicity condition (\ref{adm2}) also holds.   
Namely, every point along the boundary $\partial\Omega_\ve(t)$ moves
outward as $t$ increases.  Compared with $\partial \Omega(t)$,
two portions of the boundary have been modified: the chord
$\Hat{Q_1(t),Q_2(t)}$ cutting away the corner, and the bump (\ref{x1ep}) around $x_1(t,\bar s)$.
To handle both cases, we shall use the crucial assumption (\ref{move2}) that the boundary points  $x_1(t,s)$
move strictly outward for $t\approx \tau$ and $s\approx \bar s$.

{\bf (1) The chord.}    By the assumption (\ref{move2}) it follows that 
$${d\over dt} \la P(t), \bfn\ra~>~\kappa_1~>0$$
for some constant $\kappa_1$ and all times $t$ in a neighborhood of $\tau$.
By (\ref{r12eq}), for $i=1,2$ and all $\ve>0$ small enough,  we have
$${d\over dt}  \la Q_i(t), \bfn\ra ~=~{d\over dt} \la P(t), \bfn\ra - {d\over dt} \ell(t)~>~ \kappa_1 + \O(1) \cdot \ve^\gamma~\geq~{\kappa_1\over 2}\,,$$
showing that all points along the chord $\Hat{Q_1(t),Q_2(t)}$  move outward with speed $\geq \kappa_1/2$.

{\bf (2) The bump.} 
Here the analysis is essentially the same as in step {\bf 6}, CASE 1, of the  proof of
Theorem~\ref{t:41}.

Calling $\bfn_1^\ve(t,s)$ the unit outer normal to $\Omega_\ve(t)$ at the boundary point 
$x_1^\ve(t,s)$    defined at    (\ref{x1ep}),
by (\ref{xtn}) we have
\bel{xtn1} \langle x^\ve_t, \bfn_1^\ve\rangle~=~ \langle x^\ve_t, \bfn_1\rangle 
+  \langle x^\ve_t, \bfn_1^\ve-\bfn_1\rangle~=~ \langle x_t, \bfn_1\rangle 
+\left({d\over dt} \sigma_\ve\right) \vp_\ve + \la x^\ve_t, \bfn_1^\ve-\bfn_1\ra~>~0.
\eeq
Indeed, by  (\ref{move2}) the  term $ \langle x_t, \bfn_1\rangle$ on the right hand side remains uniformly positive for all $(t,s)$ in a neighborhood
of $(\tau,\bar s)$.  On the other hand, the last two terms can be rendered arbitrarily small by choosing $\gamma>1$
and taking $\ve>0$ small enough.
This establishes (ii), completing the proof of the theorem.
\endproof

\section{Optimality conditions at boundary points}
\label{sec:6}
\setcounter{equation}{0}
In this section we study the behavior of the boundary $\partial \Omega(\tau)\cap V$ at a point $P$
where it
meets the boundary $\partial V$. More precisely, the following situation will be considered.
\begi
\item[{\bf (A3)}] {\it
There exists $\tau,\delta_0>0$ such that, for $s\in [-\delta_0, \delta_0]$,
the map $s\mapsto y(s)$ is a $\C^1$ arc-length parameterization of a portion of the boundary $\partial V$.
 
Moreover, a portion of boundary $\partial\Omega(t)$ admits a $\C^{1,1}$ parameterization of the form
$$(t,s)\mapsto x(t,s),\quad s\in [0, s_0],~~|t-\tau|\leq \delta_0\,,$$
such that, for some nondecreasing function $s(\cdot)$,
\bel{par5}x(t,0) = y(s(t)),\qquad x(\tau,0)=y(0)=P,\qquad \bigl| x_s(\tau,0)\bigr|>0.\eeq
 
}\endi
We denote by
$$
\bfe~\doteq~{d\over ds} y(s)\bigg|_{s=0}$$
the unit vector tangent to the boundary $\partial V$ at the point $P=y(0)$.
Notice that $x_s(\tau,0)$ is a tangent vector to the boundary $\partial \Omega(\tau)$ at the same point
$P$. In the above setting, we have
  
\begin{figure}[ht]
\centerline{\hbox{\includegraphics[width=11cm]{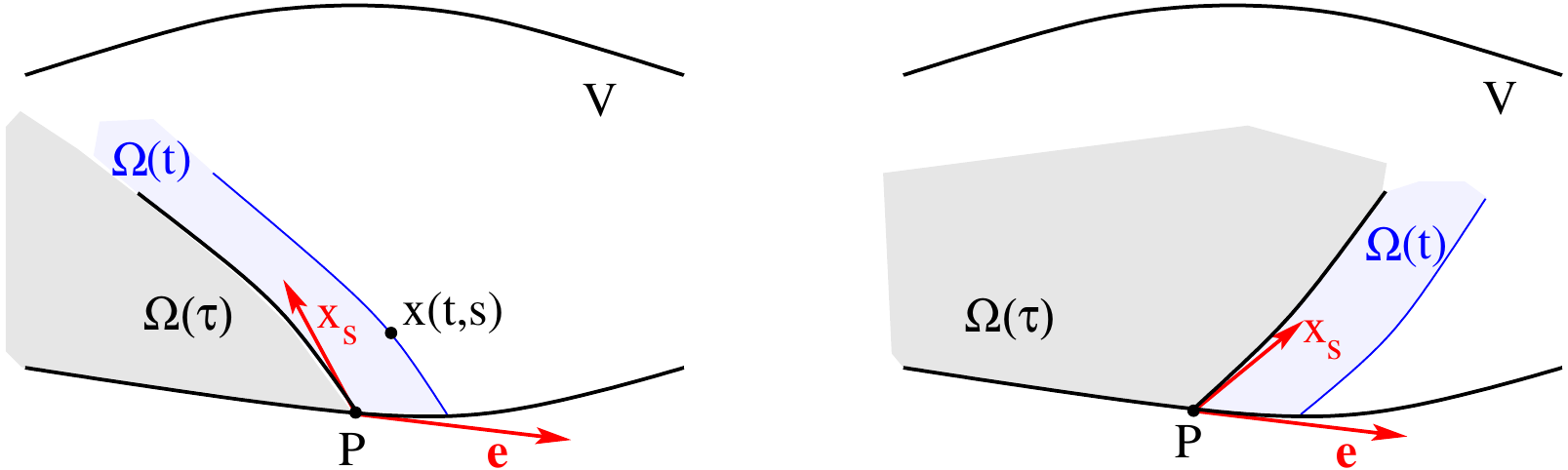}}}
\caption{\small Two cases where the boundary $\partial \Omega(\tau)\cap V$ intersects the boundary 
of $V$ at a point $P= y(0)$, in a non-perpendicular way.}
\label{f:csm26}
\end{figure}

\begin{theorem} \label{t:61} Let $t\mapsto\Omega(t)$ be an optimal slicing strategy for the
set $V$, and let the assumptions {\bf (A3)} hold.
Then
\begi
\item[(i)] either the junction is perpendicular:
\bel{prp} \la \bfe, \, x_s(\tau,0)\ra~=~0,\eeq
\item[(ii)] or else the junction point does not move:
\bel{still} x_t(\tau,0)\,=\,0.\eeq
\endi
\end{theorem}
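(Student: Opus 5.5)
We argue by contradiction, following the scheme of Theorem~\ref{t:51}. Assume that neither (\ref{prp}) nor (\ref{still}) holds. Since $x_t(\tau,0)\neq 0$ and, by (A3), the contact point $x(t,0)=y(s(t))$ slides along $\partial V$ with $s(\cdot)$ nondecreasing, the arc $x(t,\cdot)$ must be moving outward near $P$. We therefore assume, as in (A1)--(A2) and shrinking $\delta_0$, $s_0$ if needed, that $\la x_t(t,s),\bfn(t,s)\ra\geq c_0>0$ for $|t-\tau|\leq\delta_0$ and $s\in[0,s_0]$. Rescaling $s$, we take $|x_s(\tau,s)|=1$. Non-perpendicularity means that the unit tangents $\bfe$ and ${\bf t}\doteq x_s(\tau,0)$ satisfy $\la\bfe,{\bf t}\ra\neq 0$. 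Hence the region $\Omega(t)$ meets $\partial V$ at $P$ at an angle $\theta\neq\pi/2$, and on one side the angle is acute.

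The construction is a corner-cutting at the boundary, similar to the one in Theorem~\ref{t:51}, except that now there is only one relative boundary arc and the segment $\partial V$ carries no cost. Near $P(t)=x(t,0)$ we modify the arc $x(t,\cdot)$ on a parameter interval of length $\rho(t)\sim\ell(t)=\ve^\gamma\psi_\ve(t-\tau)$, with $\gamma>1$. We replace the piece $x(t,[0,\rho])$ by the straight segment from $x(t,\rho)$ to the point of $\partial V$ closest to $x(t,\rho)$, i.e.\ the segment hitting $\partial V$ perpendicularly. If $\Omega(t)$ lies on the obtuse side, this adds a small approximately triangular region. If it lies on the acute side, it removes one.

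The key length estimate parallels (\ref{DLraw})--(\ref{DL}). The old piece has length $\rho+\O(\rho^2)$, while the new segment has length $\rho\,|\sin\theta|+\O(\rho^2)$. The gain is therefore at least $\kappa_0\ell(t)$ with $\kappa_0>0$, precisely because $\la\bfe,{\bf t}\ra\neq0$. Since $y$ is only $\C^1$, the $\O(\rho^2)$ remainders weaken to $o(\rho)$, which is still enough. Integrating over $[\tau-\ve,\tau+\ve]$ gives a gain of order $\ve^{\gamma+2}$, as in (\ref{gaincorner}). The area changes by $\O(\ell^2)$. We compensate for it exactly by a bump $\sigma_\ve(t)\vp_\ve(s-\bar s)\bfn(t,s)$ at an interior point $\bar s\in\,]0,s_0[$, with the sign chosen opposite to that of the corner modification. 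The function $\sigma_\ve=\O(\ve^{2\gamma+1})$ is determined by the implicit function theorem exactly as in step~4 of Theorem~\ref{t:51}, and the extra cost it produces is $\O(\ve^{2\gamma+3})$, as in (\ref{compcost}).

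The main obstacle is monotonicity: both the new segment and the bump must still move outward. For the bump, we reuse (\ref{xtn1}), relying on $c_0>0$. For the segment, we use the fact that its endpoint on the arc moves outward with normal speed $\geq c_0-\O(\ve^{\gamma})$, since $\dot\ell=\O(\ve^\gamma)$. Its endpoint on $\partial V$ slides monotonically because $\dot s(t)>0$, which follows from $x_t(\tau,0)\neq0$. One must check that the segment sweeps outward; this holds because its direction varies by only $\O(\ve^{\gamma-1})$ in time. This is where the hypothesis $x_t(\tau,0)\neq0$ is essential, and it is the delicate step, especially in the case where a triangle is removed, since the removed region must not reappear later. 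Once monotonicity is verified, we obtain a slicing with cost strictly smaller than that of $\Omega(\cdot)$, which is a contradiction.
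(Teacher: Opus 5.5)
This is essentially the paper's proof: you cut off the small curvilinear triangle at the contact point with an almost perpendicular chord (removing it in the acute case and adding it in the obtuse one), restore the area with a bump $\sigma_\ve\vp_\ve(s-\bar s)\bfn$ at an interior point, and get monotonicity from $\dot s>0$ and the outward motion of the arc. The one step to change is the chord: take, as the paper does, the line $\la P(t)-x,\bfe\ra=\ell(t)$ perpendicular to the fixed vector $\bfe$, so that $\frac{d}{dt}\la Q_i,\bfe\ra=\frac{d}{dt}\la P,\bfe\ra-\dot\ell\geq\kappa_1/2$ gives monotonicity at once; your closest-point foot on the merely $\C^1$ curve $\partial V$ need not be unique or Lipschitz in $t$, and the sweep is driven by the $\bfe$-components of the endpoint velocities, i.e.\ by $\dot s>0$, not by the normal speed $c_0$ or by an $\O(\ve^{\gamma-1})$ rotation bound.
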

 
{\bf Proof.} By the last assumption in (\ref{par5}), performing a variable change we can assume
that at time $t=\tau$ the derivative of the map $s\mapsto x(\tau,s)$ satisfies
$$\bigl|x_s(\tau,s)\bigr|~=~1\qquad\quad\forall s.$$
If (\ref{still}) fails, by possibly shrinking the values of $s_0, \delta_0$, by continuity we can assume
$x(t,0)=y\bigl(s(t)\bigr) $ with
\bel{move5}
{d\over dt} s(t)\,\geq\,c_0\,>\,0\qquad
\forall t\in[\tau-\delta_0,\tau+\delta_0],\eeq
\bel{move6} \la \bfn(t,s),\,x_t(t,s)\ra~\geq~c_1~>~0\qquad
\hbox{for}~~|t-\tau|\leq \delta_0, ~ s\in [0, s_0],\eeq
where $\bfn(t,s)$ denotes the unit outer normal to $\partial\Omega(t)$ at $x(t,s)$.
\v
Assume that the perpendicularity relation (\ref{prp}) also fails. In the following we work out a proof assuming that
\bel{acute} \la \bfe, \, x_s(\tau,0)\ra~<~0.\eeq
In this case, we start by constructing the sets $\Hat\Omega_\ve(t)$ obtained by removing from $\Omega(t)$ 
a triangular region $\Gamma_\ve(t)$ near the outward corner (see Fig.\ref{f:csm27}, left) and eventually reach a contradicion. \\
In the case where
\bel{obtuse} \la \bfe, \, x_s(\tau,0)\ra~>~0.\eeq
we start by adding to $\Omega(t)$ a triangular region $\Gamma_\ve(t)$ near the inward corner (see Fig.~\ref{f:csm27}, right) and reach a contradiction
by an entirely similar argument.

\begin{figure}[ht]
\centerline{\hbox{\includegraphics[width=12cm]{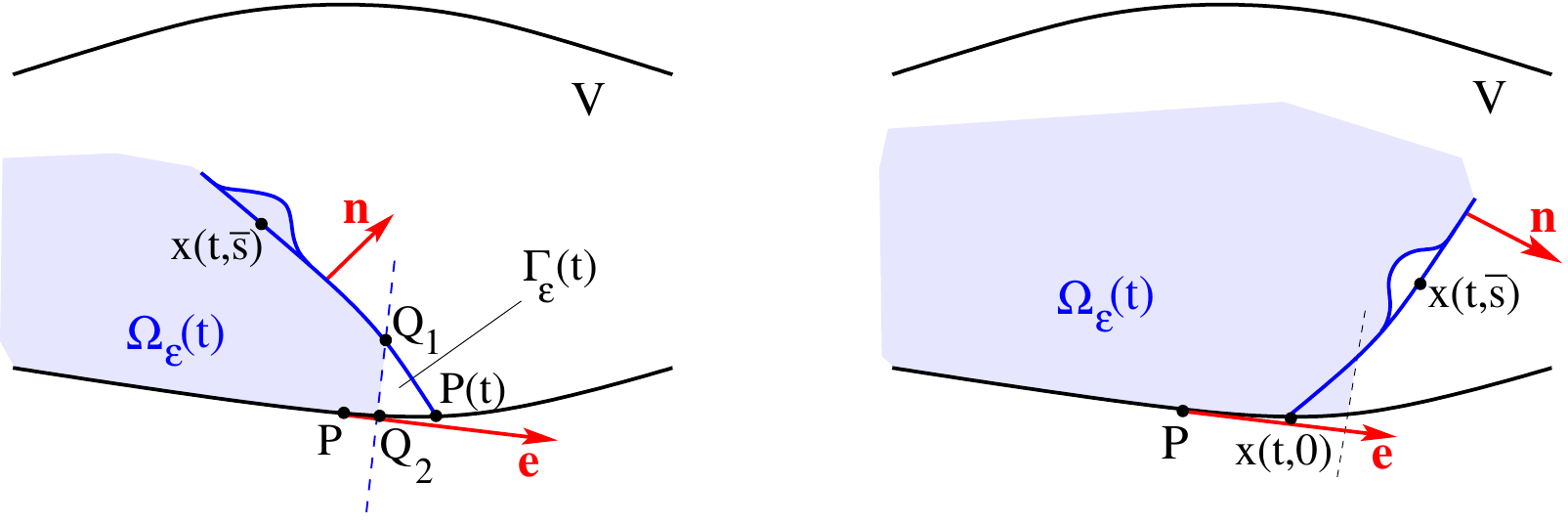}}}
\caption{\small Left: the construction of the slicing strategy $\Omega_\ve(\cdot)$ in the case (\ref{acute}). Right:  the slicing strategy $\Omega_\ve(\cdot)$ in the case (\ref{obtuse}).}
\label{f:csm27}
\end{figure}

With reference to Fig.~\ref{f:csm27}, left, let $P(t)$ be the point where the relative boundary of $\Omega(t)$ meets the 
boundary of $V$, so that $P(\tau)=P$ is the point in (\ref{par5}). 
Let $\Gamma_\ve(t)$ be the triangular region enclosed between $\partial V$, $\partial \Omega(t)$, and the
line
\bel{Gep17} \Big\{
x\in \R^2\,;~~\la P(t)-x, \bfe\ra~=~\ve^\gamma \psi_\ve(t-\tau)\Big\},\eeq
where $\psi_\ve$ is the function introduced at (\ref{psidef}).  Finally, define
 $$\Hat\Omega_\ve(t)\,\doteq \,\Omega(t)\setminus \Gamma_\ve(t).$$ 
 Clearly, the sets $\Hat\Omega_\ve(t)$ now have area $\leq t$.
To achieve the identity (\ref{areat}), we fix $0<\bar s<s_0$ and
enlarge each set $\Hat\Omega_\ve(t)$ by pushing its boundary outward in a neighborhood of the
point $x(t,\bar s)$.
 
More precisely, calling $\bfn(t,s)$ the unit outer normal to the boundary $\partial \Omega(t)$
at the point $x(t,s)$, we define $\Omega_\ve(t)$ to be the set obtained from $\Hat \Omega_\ve(t)$ replacing the portion of the boundary $\bigl\{x(t, s)\,;~s\in [0, s_0]\bigr\}$
with
\bel{xepp} x^\ve(t,s)~=~x(t,s) + \sigma_\ve(t) \vp_\ve(s-\bar s) \bfn(t,s),\qquad\qquad
s\in [0, s_0],\eeq
for some $\sigma_\ve(t)\geq 0$. As in the proof of Theorem~\ref{t:41},
we claim that, for all $\ve>0$ small enough, the following holds.
\begi
\item[(i)] The function $t\mapsto \sigma_\ve(t)$ is uniquely determined by the area identity (\ref{areat}).
\item[(ii)] The multifunction $t\mapsto \Omega_\ve(t)$ is a slicing of $V$.
\item[(iii)] The cost of $\Omega_\ve(\cdot)$ is strictly small than the cost of the original strategy $\Omega(\cdot)$.
\endi
 The arguments to prove (i)--(iii) are essentially the same as in the proof of Theorem~\ref{t:51}.
The only difference is that now the
cost functional does not account for the portion of $\partial\Omega(t)$ that lies on $\partial V$. 
More precisely, consider the points $Q_1(t)$, $Q_2(t)$  such that 
\bel{Q12d}\la P(t)-Q_i(t), \bfe\ra~=~\ell(t)~\doteq~\ve^\gamma \psi_\ve(t-\tau), \qquad\qquad i=1,2,\eeq
and $Q_1(t)\in \partial \Omega(t)\cap V$, ~$Q_2(t)\in \partial V$, respectively (see Fig.~\ref{f:csm27}, left).
Then
\bel{costep}\bega{rl}
\H^1\bigl( \partial \Hat \Omega_\ve(t)\cap V\bigr)- \H^1\bigl( \partial \Hat \Omega(t)\cap V\bigr)
&=~
[\hbox{length of the segment with endpoints $Q_1(t), Q_2(t)$} ]\\[2mm]
&\qquad -~ [\hbox{length of the arc with endpoints $Q_1(t), P(t)$]}\\[2mm]
&\leq~-\kappa_0 \ell(t)
\enda\eeq
for some constant $\kappa_0>0$ and all $\ve>0$ small enough.
Indeed, both arcs $\Hat{Q_1(t) Q_2(t)}$ and $\Hat{Q_1(t) P(t)}$ have the same order of magnitude as $\ell(t)$.
Moreover, for $t$ close to $\tau$, the chord $\Hat{Q_1 Q_2}$ meets 
the boundary $\partial V$ at an almost perpendicular angle, while by the assumption (\ref{acute}) the arc $\Hat{Q_1 P}$ meets $\partial V$ at an angle bounded away from 
$\pi/2$.  

The claims (i)--(iii) will  be proved in a few steps, following those  in Section~\ref{sec:5}.
 \v
\noindent{\bf 1.} The points $Q_1, Q_2$ are defined more precisely as
$Q_1(t) = x\bigl(t,\rho_1(t)\bigr)$,  $Q_2(t)= y \bigl( s(\rho_2(t)\bigr)$,  where 
$\rho_1,\rho_2$ are determined by the equations
\bel{r12eq}\la P(t)-x(t,\rho_1(t)),\,\bfe\ra\,=\,\ell(t)\,,\qquad\quad 
\la P(t)-y(s(\rho_2(t))),\,\bfe\ra\,=\,\ell(t)\,.
\eeq
The upper bound (\ref{areaGamma}) on the area of the triangular region $\Gamma_\ve(t)$ 
remains valid, as well as the estimate (\ref{gaincorner}) on the decrease of the perimeter.
\v
{\bf 2.}
Defining the area function
$$\sigma~\mapsto~A_\ve(\sigma,t)~\doteq~\caL^2\Big(\Hat\Omega_\ve(t)\ \cup\ 
\bigl\{x(t,s)+\zeta\vp_\ve(s-\bar s)\bfn(t,s)\,;~s<0, ~\zeta\in [0, \sigma]\bigr\}\Big),$$
the identities in (\ref{Aep}) still hold.  By the implicit function theorem, for all $\ve>0$ small enough there exists a unique
$\sigma_\ve(t)$ such that (\ref{asip}) and (\ref{sigorder}) hold. This proves (i).
\v
{\bf 3.}
The increase in the perimeter caused by the deformation of the boundary at (\ref{xepp}) satisfies the same bounds
as in
(\ref{compcost}).    Combining (\ref{gaincorner}) with (\ref{compcost}) we obtain (\ref{netgain}), proving (iii).
\v
{\bf 4.}  It remains to prove (ii), showing that  for all $\ve>0$ sufficiently 
small the modified map $t\mapsto \Omega_\ve(t)$ is still a slicing of $V$.
The  construction of the bump near the point $x(t, \bar s)$ already guarantees that
$\caL^2(\Omega_\ve(t))=t$ for every $t$. 
We claim that the
monotonicity condition (\ref{adm2}) also holds.   
Namely, every point along the boundary $\partial\Omega_\ve(t)$ moves
outward as $t$ increases.  Compared with $\partial \Omega(t)$,
two portions of the boundary have been modified: the chord
$\Hat{Q_1(t),Q_2(t)}$ cutting away the corner, and the bump (\ref{x1ep}) around $x(t,\bar s)$.

{\bf (1) The chord.}    By the assumption (\ref{move5}) it follows
$${d\over dt} \la P(t), \bfe\ra~>~\kappa_1~>0$$
for some constant $\kappa_1$ and all times $t$ in a neighborhood of $\tau$.
By (\ref{r12eq}), for $i=1,2$ and all $\ve>0$ small enough,  we have
$${d\over dt}  \la Q_i(t), \bfe\ra ~=~{d\over dt} \la P(t), \bfe\ra - {d\over dt} \ell(t)~>~ \kappa_1 + \O(1) \cdot \ve^\gamma~\geq~{\kappa_1\over 2}\,,$$
showing that all points along the chord $\Hat{Q_1(t),Q_2(t)}$  move outward with speed $\geq \kappa_1/2$.

{\bf (2) The bump.} 
Calling $\bfn^\ve(t,s)$ the unit outer normal to $\Omega_\ve(t)$ at the boundary point 
$x^\ve(t,s)$    defined at    (\ref{xepp}),
as in  (\ref{xtn}) we have
\bel{xtn1} \langle x^\ve_t, \bfn^\ve\rangle~=~ \langle x^\ve_t, \bfn\rangle 
+  \langle x^\ve_t, \bfn^\ve-\bfn\rangle~=~ \langle x_t, \bfn\rangle 
+\left({d\over dt} \sigma_\ve\right) \vp_\ve + \la x^\ve_t, \bfn^\ve-\bfn\ra~>~0.
\eeq
Indeed, by  (\ref{move6}) the  term $ \langle x_t, \bfn\rangle$ on the right hand side remains uniformly positive for all $(t,s)$ in a neighborhood
of $(\tau,\bar s)$.  On the other hand,  the last two terms can be rendered arbitrarily small by choosing $\gamma>1$
and taking
$\ve>0$ small enough.
This establishes (ii), completing the proof of the theorem.
\endproof

\end{document}